\newtheorem{theorem}{Theorem}[section]
\newtheorem{lemma}{Lemma}[section]
\newtheorem{proposition}{Proposition}[section]
\newtheorem{definition}{Definition}[section]
\newtheorem{remark}{Remark}[section]
\numberwithin{equation}{section}
\def\essinf{\mathop{\rm ess~inf}}
\def\R{\mathbb{R}}
\def\Z{\mathbb{Z}}
\def\pa{\partial}
\begin{document}

\title[Transition fronts of KPP-type lattice random equations]{Transition fronts of KPP-type lattice random equations}

\author{Feng Cao}
\address{Department of Mathematics, Nanjing University of Aeronautics and Astronautics, Nanjing, Jiangsu 210016, P. R. China}
\email{fcao@nuaa.edu.cn}
\thanks{Research of F. Cao was supported by NSF of China No. 11871273, and the Fundamental Research Funds for the Central Universities No. NS2018047.}

\author{Lu Gao}
\address{Department of Mathematics, Nanjing University of Aeronautics and Astronautics, Nanjing, Jiangsu 210016, P. R. China}
\email{gaolunuaa@sina.com}

\subjclass[2010]{35C07, 34K05, 34A34, 34K60}



\keywords{transition fronts, KPP-type lattice equations, random equations}

\begin{abstract}
In this paper, we investigate the existence and stability of random transition fronts of KPP-type lattice equations in random media, and explore the influence of the media and randomness on the wave profiles and wave speeds of such solutions. We first establish comparison principle for sub-solutions and super-solutions of KPP type lattice random equations and  prove the stability of positive constant equilibrium solution. Next, by constructing appropriate  sub-solutions and super-solutions, we show
the existence of random transition fronts. Finally, we prove the stability of random transition fronts of KPP-type lattice random equations.

\end{abstract}

\maketitle



\section{Introduction}

The current paper is to explore the existence and stability of  transition fronts for the following KPP-type lattice random equations
\begin{equation}\label{main-eqn}
\dot{u}_i(t) =u_{i+1}( t ) -2u_i( t) +u_{i-1}( t) +a(\theta _t\omega) u_i( t)( 1-u_i( t)) ,\ \ i\in \mathbb{Z},
\end{equation}
where $\omega\in\Omega$, $(\Omega,\mathcal{F},\mathbb{P})$ is a given probability space, $\theta_t$ is an ergodic metric dynamical system on $\Omega$, $a:\Omega\rightarrow(0,\infty)$ is measurable, and
$a^\omega(t):=a(\theta_t\omega)$ is locally H\"older continuous in $t\in\R$ for every $\omega\in\Omega$.

Equation \eqref{main-eqn} is used to model the  population dynamics of species living in patchy environments in biology and ecology (see, for example, \cite{ShKa97, ShSw90}).
It is a spatial-discrete counterpart  of the following reaction diffusion equation
\begin{equation}
\label{continuous-main-eqn}
\partial_tu=u_{xx}+a(\theta _t\omega)u(1-u),\ \ x\in\mathbb{R}.
\end{equation}

Equation \eqref{continuous-main-eqn} is widely used to model the population dynamics of species when the movement
or internal dispersal of the organisms occurs between adjacent locations randomly in spatially
continuous media.  The study of traveling wave solutions of \eqref{continuous-main-eqn} traces back to Fisher \cite{Fish37} and Kolmogorov, Petrovsky and Piskunov \cite{KPP37}
 in the special case $a(\theta _t\omega)\equiv 1$.   They investigated the existence of traveling wave solutions, that is, solutions of the form $u(x,t)=\phi(x-ct)$ with $\phi(-\infty)=1$, $\phi(+\infty)=0$. Fisher in \cite{Fish37} proved that \eqref{continuous-main-eqn} with $a(\theta _t\omega)\equiv 1$ admits traveling wave solutions if the wave speed  $c\geq2$ and showed that there are no such traveling wave solutions of slower speed.  Kolmogorov, Petrovsky, and Piskunov in \cite{KPP37} proved that for any nonnegative solution $u(x,t)$ of \eqref{continuous-main-eqn} with $a(\theta _t\omega)\equiv 1$, if at time $t=0$, $u$ is $1$ near $-\infty$ and $0$ near $\infty$, then $\lim_{t\to \infty}u(t,ct)$ is $0$ if $c>2$ and $1$ if $c<2$. $c_*:=2$ is therefore the minimal wave speed and is also called the spreading speed of \eqref{continuous-main-eqn} with $a(\theta _t\omega)\equiv 1$. The spreading properties was extended to more general monostable nonlinearities by Aronson and Weinberger \cite{ArWe78}.

Since then, traveling wave solutions of Fisher or KPP type evolution equations in spatially and temporally homogeneous media or spatially and/or temporally periodic media have been widely invetigated. The reader is referred to \cite{ArWe75, ArWe78,   BeHaNa1, BeHaNa2, BeHaRo, BeNa12, FrGa, HuSh09, Kam76, KoSh, LiZh1, LiZh2, Na09,  NoRuXi, NoXi, NRRZ12, Sa76, Sh10, Sh04,  Uch78, We02} for the study of Fisher or KPP type reaction diffusion equations in homogeneous or periodic media. As for the study of Fisher or KPP type lattice equations  in homogeneous or periodic media, the reader is referred to  \cite{ChFuGu06, ChGu02, ChGu03, HuZi94, MaZh, WuZo97, ZiHaHu93} for the existence and stability of traveling wave solutions in homogeneous media,  and to \cite{GuHa06, GuWu09, HuZi94} for the existence and stability of periodic traveling wave solutions in spatially periodic media. Recently, Cao and Shen \cite{CaSh18}  proved the existence and stability of periodic traveling wave solutions for Fisher or KPP type lattice equations in spatially and temporally periodic media.

The study of  traveling wave solutions of general time and/or space dependent Fisher or KPP type equations is attracting more and more attention due to the
presence of general time and space variations in real world problems. To study
the front propagation dynamics of Fisher or KPP type equations with general time and/or space
dependence, one first needs to properly extend the notion of traveling wave solutions in the
classical sense. Some general extension has been introduced in literature.
For example, in \cite{Sh04, Sh11}, notions of random traveling wave solutions and generalized traveling wave solutions are introduced for random Fisher or KPP type equations and
quite general time dependent Fisher or KPP type equations, respectively. In \cite{BeHa07, BeHa12}, a notion of generalized transition waves is introduced for Fisher or KPP type equations with general space and time dependence. Among others, the authors of \cite{NaRo12,NaRo15,NaRo17} proved the existence of generalized transition waves of general time dependent and space periodic, or time independent and space almost periodic Fisher or KPP type reaction diffusion equations. Zlatos \cite{Zl12} established the existence of generalized transition waves of spatially inhomogeneous Fisher or KPP type reaction diffusion equations  under some specific hypotheses. Shen \cite{Sh17} proved the stability of generalized transition waves of Fisher or KPP type reaction diffusion equations with quite general time and space dependence.

However, there is little study on the traveling wave solutions of Fisher or KPP type lattice
equations with general time and/or space dependence. Since in nature, many systems are subject to irregular influences arisen from various kind of noise, it is also of great importance to study
traveling wave solutions in random media. The purpose of our current paper is to investigate
the existence and stability of traveling wave solutions for KPP-type lattice
equations in random media under very general assumption (See {\bf (H)} below), and to understand the influence of the media and randomness on the wave profiles and wave speeds of such solutions. We note that the work \cite{Sh18} studied the existence and stability of random transition fronts for random KPP-type reaction diffusion equations.

It should be pointed out that Cao and Shen \cite{CaSh17, CaSh18} investigated the existence and stability of transition fronts for KPP-type lattice equations with general time dependence under some more restrictive assumptions. For KPP-type lattice equations in random media, although it's easy to get that the wave speed is stationary ergodic in $t$, but it is far from being obvious that the same is true for the random profile. Besides, when dealing with spatial-discrete equations, we need find another approach to get the existence of traveling wave solutions due to the lack of space regularity.

First we give some notations and assumption related to (\ref{main-eqn}). Let
$$
\underline{a}( \omega) =\underset{t-s\rightarrow \infty}{\lim\inf}\frac{1}{t-s}\int_s^t{a( \theta _{\tau}\omega)}d\tau :=\underset{r\rightarrow \infty}{\lim}\underset{t-s\ge r}{\inf}\frac{1}{t-s}\int_s^t{a( \theta _{\tau}\omega)}d\tau
$$\\
and
$$
\overline{a}(\omega) =\underset{t-s\rightarrow \infty}{\lim\sup}\frac{1}{t-s}\int_s^t{a( \theta _{\tau}\omega)}d\tau :=\underset{r\rightarrow \infty}{\lim}\underset{t-s\ge r}{\sup}\frac{1}{t-s}\int_s^t{a( \theta _{\tau}\omega)}d\tau.
$$\\

We call $\underline{a}(\cdot)$ and $\overline{a}(\cdot)$ the least mean and the greatest mean of $a(\cdot)$, respectively. It's easy to get that \\
$$ \underline{a}(\theta _t\omega ) =\underline{a}(\omega) \ \ \text{and } \ \ \overline{a}( \theta _t\omega) =\overline{a}( \omega) \ \ \mbox{ for all } t\in\mathbb{R},  $$
and
$$
 \underline{a}(\omega) =\underset{t,s\in \mathbb{Q},t-s\rightarrow \infty}{\lim\inf}\frac{1}{t-s}\int_s^t{a( \theta _{\tau}\omega)}d\tau \ \ \ \text{and\\\ } \ \ \overline{a}(\omega) =\underset{t,s\in \mathbb{Q},t-s\rightarrow \infty}{\lim\sup}\frac{1}{t-s}\int_s^t{a( \theta _{\tau}\omega)}d\tau .
$$\\
Then  $\underline{a}(\omega)$  and $\overline{a}(\omega)$  are measurable in $\omega$.

Throughout the paper, we assume that

\medskip
\noindent{\bf (H)}  $ \ 0<\underline{a}( \omega ) \le \overline{a}( \omega) <\infty$ \ for\ a.e.\ $\omega \in \Omega . $

\medskip

This implies that $\underline{a}(\cdot), a(\cdot), \overline{a}(\cdot)\in L^1(\Omega,\mathcal{F},\mathbb{P})$ (see Lemma \ref{L-lemma}). Also {\bf (H)} together with the ergodicity of the metric dynamical system $(\Omega,\mathcal{F},\mathbb{P},\{\theta_t\}_{t\in\mathbb{R}})$ imply that, there are $\underline{a},\overline{a}\in\mathbb{R}^+$ and a measurable subset $\Omega_0\subset\Omega$ with  $\mathbb{P}(\Omega_0)=1$ such that \\
$$\left\{ \begin{array}{l} 	\theta _t\Omega _0=\Omega _0\ \ \forall t\in \mathbb{R}\\ 	\underset{t-s\rightarrow \infty}{\lim\inf}\frac{1}{t-s}\int_s^t{a( \theta _{\tau}\omega)}d\tau =\underline{a}\ \ \forall \omega \in \Omega _0\\ 	\underset{t-s\rightarrow \infty}{\lim\sup}\frac{1}{t-s}\int_s^t{a( \theta _{\tau}\omega)}d\tau =\overline{a}\ \ \forall \omega \in \Omega _0.\\ \end{array} \right. $$

Let
$$l^\infty(\Z)=\{u=\{u_i\}_{i\in \Z}:\sup \limits_{i \in\Z}|u_i|<\infty\}$$
with norm $\|u\|=\|u\|_\infty=\sup_{i\in\Z}|u_i|$. Since $a(\theta_t\omega)$ is locally H\"older continuous in $t\in\R$ for every $\omega\in\Omega$, for any given $u^0\in l^\infty(\Z)$ , \eqref{main-eqn} has a unique (local) solution
$u(t;u^0,\omega)=\{u_i(t;u^0,\omega)\}_{i\in\Z}$ with $u(0;u^0,\omega)=u^0$. Note that, if $u^0_i\geq0$
for all $i\in \Z$, then $u(t;u^0,\omega)=\{u_i(t;u^0,\omega)\}_{i\in\Z}$ exists for all $t\geq0$ and $u_i(t;u^0,\omega)\geq0$ for all $i\in\Z$ and $t\geq 0$ (see Proposition \ref{comparison}).

A solution $u(t;\omega)=\{u_i(t;\omega)\}_{i\in\mathbb{Z}}$ of (\ref{main-eqn}) is called an \textit{entire solution} if it is a solution of (\ref{main-eqn}) for $t\in\mathbb{R}$.

\begin{definition} [Transition front] An entire solution $u(t;\omega)=\{u_i(t;\omega)\}_{i\in\mathbb{Z}}$ is called a  \textit{random generalized traveling wave} or a \textit{random transition front} of (\ref{main-eqn}) connecting $1$ and $0$
if for  a.e. $\omega\in\Omega$,
$$
 u_i( t;\omega ) ={\varPhi} ( i-\int_0^t{c( s;\omega) ds},\theta _t\omega)
 $$
for some $ {\varPhi}( x,\omega)$ $(x\in\R)$ and $c( t;\omega) $, where
$ {\varPhi}( x,\omega)$ and $c( t;\omega) $ are measurable in $\omega$,
and for a.e. $\omega\in\Omega$,
\begin{equation*}
0<{\varPhi}( x,\omega) <1,\ \text{and\ }\underset{x\rightarrow -\infty}{\lim}{\varPhi}( x,\theta _t\omega) =1,\ \underset{x\rightarrow \infty}{\lim}{\varPhi}( x,\theta _t\omega) =0\ \text{uniformly\ in\ }t\in \mathbb{R}.
\end{equation*}
\end{definition}

Suppose that $u(t;\omega)=\{u_i(t;\omega)\}_{i\in\mathbb{Z}}$ with $u_i( t;\omega) ={\varPhi}( i-\int_0^t{c( s;\omega) ds},\theta _t\omega )$ is a \textit{random transition front} of (\ref{main-eqn}). If $ {\varPhi} (x,\omega)$ is non-increasing in $x$ for a.e. $\omega\in\Omega$ and all $x\in\mathbb{R}$, then $u(t;\omega)$ is said to be a \textit{monotone random transition front}. If there is $\overline{c}_{\inf}\in\mathbb{R}$ such that for a.e. $\omega\in\Omega$,
$$ \underset{t-s\rightarrow \infty}{\lim\inf}\frac{1}{t-s}\int_s^t{c( \tau ;\omega ) d\tau}=\overline{c}_{\inf}, $$
then $\overline{c}_{\inf}$ is called its \textit{least mean speed}.

For given $\mu>0$, let
$$
 c_0:=\inf \limits_{\mu>0}\frac{e^{\mu}+e^{-\mu}-2+\underline{a}}{\mu}.
 $$
 By \cite[Lemma 5.1]{CaSh17},
 there is a unique $\mu^*>0$ such that
 $$ c_0=\frac{e^{\mu ^*}+e^{-\mu ^*}-2+\underline{a}}{\mu ^*} $$
  and for any $\gamma>c_0$, the equation $ \gamma=\frac{e^{\mu }+e^{-\mu}-2+\underline{a}}{\mu} $ has exactly two positive solutions for $\mu$.

Now we are in a position to state the main results on the existence and stability of random transition fronts of KPP-type lattice random equations.

\begin{theorem}\label{exist-thm}

For any given $\gamma>c_0$, there is a monotone random transition front of  \eqref{main-eqn} with least mean speed $\overline{c}_{\inf}=\gamma$. More precisely,  for any given $\gamma>c_0$,  let $0<\mu<{\mu}^*$ be such that $\frac{e^{\mu}+e^{-\mu}-2+\underline{a}}{\mu}=\gamma$. Then \eqref{main-eqn} has  a monotone random transition front $u(t;\omega)=\{u_i(t;\omega)\}_{i\in\mathbb{Z}}$ with $u_i( t;\omega ) ={\varPhi} ( i-\int_0^t{c(s;\omega,\mu ) ds},\theta _t\omega )$, where $c(t;\omega, \mu)=\frac{e^{\mu}+e^{-\mu}-2+a(\theta_t\omega)}{\mu}$ and hence $\overline{c}_{\inf}=\frac{e^{\mu}+e^{-\mu}-2+\underline{a}}{\mu}=\gamma$.
Moreover, for any $\omega\in \Omega_0$,
$$
\lim\limits_{x\rightarrow -\infty}{\varPhi}(x,\theta _t\omega)=1 \text{ and }
\lim\limits_{x\rightarrow \infty}\frac{{\varPhi}( x,\theta _t\omega)}{e^{-\mu x}}=1\ \text{uniformly\ in\ }t\in \mathbb{R}.$$\\
\end{theorem}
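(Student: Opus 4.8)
The plan is to construct the front as the locally uniform limit of an approximating sequence of solutions emanating from $t=-n$, squeezed between an explicit exponential super-solution and a matching sub-solution, and then to recover the profile $\varPhi$ and verify its asymptotics using the ergodic identities that hold on $\Omega_0$. Throughout write $L(\mu)=e^{\mu}+e^{-\mu}-2$ and $\xi_i(t;\omega)=i-\int_0^t c(s;\omega,\mu)\,ds$, so that the prescribed speed is exactly the one making $\mu\,c(t;\omega,\mu)=L(\mu)+a(\theta_t\omega)$.

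For the super-solution, a direct computation shows that $\overline{u}_i(t;\omega)=e^{-\mu\xi_i(t;\omega)}$ solves the linearized equation exactly (the definition of $c(t;\omega,\mu)$ is chosen precisely so the zeroth-order terms cancel), so that for the nonlinear operator $N[u]_i:=\dot u_i-(u_{i+1}-2u_i+u_{i-1})-a(\theta_t\omega)u_i(1-u_i)$ one gets $N[\overline u]_i=a(\theta_t\omega)\overline u_i^2\ge 0$. Hence $\overline u$, and therefore $\min\{1,\overline u\}$ (the constant $1$ being an equilibrium), is a super-solution; by the comparison principle (Proposition \ref{comparison}) it bounds the approximating solutions from above and forces the $e^{-\mu x}$ decay rate at $+\infty$.

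The crux is the sub-solution. I would seek $\underline u_i=\big(e^{-\mu\xi_i}-B_i\big)_+$ with a correction $B_i=e^{-(\mu+\tilde\mu)\xi_i+\Psi(t)}$, $0<\tilde\mu<\mu$, where the time-modulation $\Psi$ is introduced to kill the indefinite sign coming from the fluctuations of $a(\theta_t\omega)$. Setting $g(\nu;t):=L(\nu)+a(\theta_t\omega)-\nu c(t;\omega,\mu)$ one finds that the linearized residual of $\underline u$ equals $(g(\mu+\tilde\mu;t)-\dot\Psi)B$, so choosing $\dot\Psi(t)=g(\mu+\tilde\mu;t)+\kappa$ for a small $\kappa>0$ reduces the sub-solution inequality to $-\kappa B_i+a(\theta_t\omega)(e^{-\mu\xi_i}-B_i)^2\le 0$; because $\tilde\mu<\mu$ the quadratic term is of higher exponential order than $B_i$ on the positivity set, so this holds once the front is advanced far enough. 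The hypothesis $0<\mu<\mu^*$ is what keeps the correction subdominant and makes a small $P(\tilde\mu):=L(\mu+\tilde\mu)-\tfrac{\mu+\tilde\mu}{\mu}L(\mu)$ available. I expect the real difficulty to be that $\Psi(t)=\int_0^t g(\mu+\tilde\mu;s)\,ds+\kappa t$ is \emph{not} bounded for a general realization: it is governed by $tP(\tilde\mu)-\tfrac{\tilde\mu}{\mu}\int_0^t a(\theta_s\omega)\,ds$, whose growth rate oscillates between the least and greatest means, so the sub-solution cannot be used as a single entire barrier but must be re-centered along the approximating sequence.

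Finally, for the passage to the limit and the identification, I would let $u^n(t;\omega)$ solve \eqref{main-eqn} on $[-n,\infty)$ with front-like data at $t=-n$ lying between the re-centered sub- and super-solutions, normalized by a shift pinning a fixed interface level at a controlled position. Since the lattice offers no spatial compactness, the limit $n\to\infty$ must be taken from temporal equicontinuity — the right-hand side of \eqref{main-eqn} is bounded and locally Lipschitz, giving uniform $C^1$ bounds in $t$ — together with a diagonal extraction over $i\in\Z$ and a sliding/monotonicity argument, producing a monotone entire solution $u^*$. I would then define $\varPhi$ through $u^*_i(t;\omega)=\varPhi(i-\int_0^t c(s;\omega,\mu)\,ds,\theta_t\omega)$, using the cocycle identity for $\theta_t$ to verify this is well-posed and measurable, deduce $\varPhi(-\infty,\cdot)=1$ from the stability of the equilibrium $1$ established earlier and $\varPhi(+\infty,\cdot)=0$ from the super-solution, and obtain the sharp tail $\varPhi(x,\theta_t\omega)/e^{-\mu x}\to 1$ by squeezing between $\overline u$ and the sub-solution (whose leading coefficients both approach $1$) on $\Omega_0$, where the least-mean limits hold for every $\omega$. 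The main obstacle, as flagged, is controlling the unbounded phase $\Psi$ so that the lower barrier survives the limit and pins the tail coefficient to exactly $1$; the secondary obstacle is running the entire-solution compactness purely in the time variable.
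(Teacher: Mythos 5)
Your super-solution and the overall squeezing strategy coincide with the paper's, but there are two genuine gaps, and you have correctly located the first without closing it. The unbounded phase $\Psi$ is not a nuisance that can be absorbed by ``re-centering along the approximating sequence'': the theorem demands $\lim_{x\to-\infty}\varPhi(x,\theta_t\omega)=1$ and $\lim_{x\to\infty}\varPhi(x,\theta_t\omega)e^{\mu x}=1$ \emph{uniformly in} $t\in\R$, which forces the lower barrier's interface to stay within a bounded distance of $\int_0^t c(s;\omega,\mu)\,ds$ for \emph{all} $t$; a phase governed by $tP-\tfrac{\tilde\mu}{\mu}\int_0^t a(\theta_s\omega)\,ds$ is in general unbounded (only its long-run average is controlled), so a re-centered sub-solution degenerates over the infinite time horizon and does not pin the tail coefficient to $1$ uniformly in $t$. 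The paper's resolution is Lemma \ref{technical} (from \cite{Sh18}): $\underline a=\sup_{A\in W^{1,\infty}_{loc}(\R)\cap L^{\infty}(\R)}\essinf_{t}(A'+a^{\omega})(t)$, which upgrades the averaged inequality you need to the \emph{pointwise} inequality $(1-\delta)a(\theta_t\omega)+A_\omega'(t)\ge \frac{\mu(e^{\tilde\mu}+e^{-\tilde\mu}-2)-\tilde\mu(e^{\mu}+e^{-\mu}-2)}{\tilde\mu-\mu}$ with a \emph{bounded} modulation $A_\omega$; the correction then carries the bounded factor $e^{(\tilde\mu/\mu-1)A_\omega(t)}$ in place of your unbounded $e^{\Psi(t)}$, and $\tilde v^{\mu,d,A_\omega}$ becomes a genuine entire barrier (Lemma \ref{sub-solution-lemma}). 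Without this device, or an equivalent one, the lower bound and hence the uniform tail asymptotics are not established.

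The second gap is the passage to the limit. A diagonal extraction over $i\in\Z$ and $n$ yields only a subsequential limit along an $\omega$-dependent subsequence, which jeopardizes both the measurability of $\varPhi$ in $\omega$ and, more seriously, the identity $\tilde\varPhi(x,t;\omega)=\tilde\varPhi(x,0;\theta_t\omega)$ that makes $u_i(t;\omega)=\varPhi(i-\int_0^t c,\theta_t\omega)$ a \emph{random} transition front --- the paper's introduction explicitly flags the stationarity of the profile as the nontrivial point. The paper avoids extraction entirely: taking the datum at time $-\tau$ to be exactly the super-solution $\overline v^{\mu}(\cdot,-\tau;\omega)$, Lemma \ref{monotone-sup} together with the semigroup property shows that $\tau\mapsto u(x,t+\tau;\overline v^{\mu}(\cdot,-\tau;\omega),\theta_{-\tau}\omega)$ is non-increasing, so $V(x,t;\omega)$ is a full monotone limit; measurability and the cocycle computation \eqref{speed1}--\eqref{speed2} then deliver stationarity. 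Your proposed normalization ``pinning a fixed interface level'' would in fact break this identity; none is needed because the super-solution is already correctly phased. (A minor quantitative point: your correction exponent $\mu+\tilde\mu$ must also lie below the second root of $\frac{e^{\nu}+e^{-\nu}-2+\underline a}{\nu}=\gamma$, which the paper enforces via $\tilde\mu<\min\{2\mu,\mu^*\}$; requiring only $\tilde\mu<\mu$ does not guarantee this when $\mu$ is near $\mu^*$, though choosing $\tilde\mu$ small enough repairs it.)
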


\begin{remark}
$(1)$ Let
$$
c_*( \omega ) =\sup\{ c:\limsup_{t\to\infty}\sup_{s\in\R, i\in\Z, |i|\le ct}| u_i( t;u^0,\theta _s\omega ) -1 |=0\ \ \text{for\ all\ }u^0\in l_{0}^{\infty}( \mathbb{Z} ) \},
$$ where
$$
l_{0}^{\infty}( \mathbb{Z} ) =\{ u=\{ u_i \} _{i\in \mathbb{Z}}\in l^{\infty}( \mathbb{Z} ) \ :\ u_i\ge 0\ \text{for\ all\ }i\in \mathbb{Z},\ u_i=0\ \text{for\ }| i |\gg 1,\ \{ u_i \} \ne 0 \} .
$$ Then by the similar arguments as proving \cite[Theorem 1.3 (2)]{CaSh17}, we can get that for a.e. $\omega\in\Omega$, $c_*( \omega )=c_0$. If $u(t;\omega)=\{u_i(t;\omega)\}_{i\in\mathbb{Z}}$ with $u_i( t;\omega ) ={\varPhi} ( i-\int_0^t{c( s;\omega) ds},\theta _t\omega)$ is a random transition front of \eqref{main-eqn} connecting $1$ and $0$, then   
$\underset{x\le z}{\inf}\ \underset{s\in \mathbb{R}}{\inf}\ \varPhi ( x,\theta _s\omega ) >0$ for all $z\in \mathbb{R}$.
Therefore, we can choose $u^0_\omega\in l_{0}^{\infty}(\mathbb{Z})$ such that
$  u^0_\omega\leq\Phi(x,\theta_s\omega)$ for all $s\in\R$. Let $0<\epsilon\ll 1$.
Then by $c_*( \omega )=c_0$ and the comparison principle, we have that
\begin{align*}
1&=\underset{t\rightarrow \infty}{\lim\inf}\underset{s\in \mathbb{R}}{\inf}u_{[ ( c_0-\epsilon ) t ]}( t;u^0_\omega,\theta _s\omega )\\
&\le \underset{t\rightarrow \infty}{\lim\inf}\underset{s\in \mathbb{R}}{\inf}u_{[ ( c_0-\epsilon ) t ]}( t;\varPhi ( \cdot ,\theta _s\omega ) ,\theta _s\omega )\\
&=\underset{t\rightarrow \infty}{\lim\inf}\underset{s\in \mathbb{R}}{\inf}\varPhi ( [ ( c_0-\epsilon ) t ] -\int_0^t{c( \tau ;\theta _s\omega ) d\tau ,\theta _{t+s}\omega} ).
\end{align*}
Note that
$
\int_0^{t+s}{c( \tau ;\omega ) d\tau}=\int_0^s{c( \tau ;\omega ) d\tau}+\int_0^t{c( \tau ;\theta _s\omega ) d\tau}.
$ Then there is a constant $M(\omega)$ such that
$
  ( c_0-\epsilon ) t \leq\int_0^{t+s}{c( \tau ;\omega ) d\tau}-\int_0^s{c( \tau ;\omega ) d\tau}+M(\omega)$ for all $t>0$,  $s\in\R$. Hence,
$$\overline{c}_{\inf}=
\underset{t\rightarrow \infty}{\lim\inf}\underset{s\in \mathbb{R}}{\inf}\frac{\int_0^{t+s}{c( \tau ;\omega ) d\tau}-\int_0^s{c( \tau ;\omega ) d\tau}}{t}\geq c_0-\epsilon.
$$
By the arbitrariness of $\epsilon>0$, we get $\overline{c}_{\inf}\geq c_0$. This implies that there is no random transition front of \eqref{main-eqn} with least mean speed less than $c_0$.

$(2)$ As for the critical random transition front of \eqref{main-eqn}, that is, random transition front of \eqref{main-eqn} with least mean speed  $\overline{c}_{\inf}=c_0$. The approach used in \cite{CaSh17} can't be applied as the stationary ergodic property of the critical random profile can't be guaranteed. We leave this question open.

\end{remark}

\begin{theorem}\label{stable-thm}
For given $\mu\in(0,\mu^*)$, the random transition front
$u(t;\omega)=\{u_i(t;\omega)\}_{i\in\mathbb{Z}}$, $u_i( t;\omega ) ={\varPhi} ( i-\int_0^t{c(s;\omega,\mu ) ds},\theta _t\omega )$
with
$ \underset{i\rightarrow \infty}{\lim}\frac{u_i(t,\omega)}{e^{-\mu ( i-\int_0^t{c(s;\omega ,\mu ) ds})}}=1$
$(c( t ;\omega ,\mu) =\frac{e^{\mu}+e^{-\mu}-2+a\left( \theta _{t}\omega \right)}{\mu} )$
is asymptotically stable, that is, for any $ \omega \in \Omega _0 $ and
$ u^0\in l^{\infty}( \mathbb{Z})$
satisfying that
\begin{equation*}
\label{stable1}
\underset{i\le i_0}{\inf}u_{i}^{0}>0\ \ \ \forall i_0\in \mathbb{Z},\ \ \underset{i\rightarrow \infty}{\lim}\frac{u_{i}^{0}}{u_i(0;\omega)}=1,
\end{equation*}
there holds
 $$ \underset{t\rightarrow \infty}{\lim}\| \frac{u_{\cdot}(t;u^0,\omega )}{u_{\cdot}(t, \omega)}-1\| _{l^{\infty}}=0.$$
\end{theorem}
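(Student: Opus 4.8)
The plan is to work with a fixed $\omega\in\Omega_0$, write $\ol u_i(t)=u_i(t;\omega)$ for the front and $v_i(t)=u_i(t;u^0,\omega)$ for the perturbed solution, and to control the ratio $\psi_i(t):=v_i(t)/\ol u_i(t)$, since the assertion is precisely $\|\psi_\cdot(t)-1\|_{l^\infty}\to0$. Everything rests on the comparison principle (Proposition~\ref{comparison}) together with one algebraic observation: since $f(\lambda u)-\lambda f(u)=\lambda(1-\lambda)u^2$ for $f(u)=u(1-u)$, the sequence $\lambda\ol u_\cdot(t)$ is a super-solution of \eqref{main-eqn} when $\lambda\ge1$ and a sub-solution when $0<\lambda\le1$. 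Consequently, \emph{if at some finite time $t_0$ one has $(1-\epsilon)\ol u_i(t_0)\le v_i(t_0)\le(1+\epsilon)\ol u_i(t_0)$ for every $i$, then the comparison principle freezes these inequalities for all $t\ge t_0$}, so $\|\psi_\cdot(t)-1\|_{l^\infty}\le\epsilon$ thereafter; letting $\epsilon\downarrow0$ then finishes the proof. Thus the whole problem reduces to producing, for each $\epsilon>0$, such a two-sided uniform sandwich at some finite time, which I would do by exhibiting global time-dependent super- and sub-solutions built out of $\ol u$ itself.

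For the upper bound I would use the super-solution $\ol V_i(t)=\min\{\Gamma(t),\,(1+\epsilon)\ol u_i(t)+K\,e^{-\mu_1(i-\int_0^t c(s)\,ds)}\}$, where $c(s)=c(s;\omega,\mu)$, the steeper rate $\mu_1\in(\mu,\mu^*)$ is chosen so that (because $c(\cdot)$ is decreasing on $(0,\mu^*)$) the moving exponential is a super-solution of the equation linearized at $0$, and $\Gamma(t)$ solves the spatially constant logistic $\dot\Gamma=a(\theta_t\omega)\Gamma(1-\Gamma)$ with $\Gamma(0)=\max\{\|u^0\|,1\}$. A direct computation, using $f(u)\le u$ and $(1+\epsilon)^2\ge1+\epsilon$, shows $\ol V$ is a super-solution; choosing first $N$ from the matched tail $u^0_i/\ol u_i(0)\to1$ and then $K$ large forces $\ol V_i(0)\ge u^0_i$ for all $i$, so $v\le\ol V$. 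The least mean $\underline a>0$ makes $\int_0^t a(\theta_\tau\omega)\,d\tau\to\infty$, whence $\Gamma(t)\downarrow1$; and at the leading edge the correction is negligible against $\ol u_i\sim e^{-\mu(\cdot)}$ since $\mu_1>\mu$. The lower bound is symmetric: on the leading edge one uses the sub-solution $\max\{0,\,(1-\epsilon)\ol u_i(t)-K'e^{-\mu_2(i-\int_0^t c(s)\,ds)}\}$ with $\mu_2\in(\mu,\mu^*)$, and in the bulk a spatially constant sub-logistic $\gamma(t)\uparrow1$; here the hypothesis $\inf_{i\le i_0}u^0_i>0$ is what lets one slip a positive barrier below $v(0)$ on every left half-line, while the matched tail again governs the edge.

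The hard part will be the \emph{intermediate region}, i.e.\ the moving interface $i\approx\int_0^t c(s)\,ds$ separating the saturated bulk (where $\ol u_i\approx1$ and the logistic term damps perturbations of the ratio) from the leading edge (where $\ol u_i$ is exponentially small and that damping degenerates). The difficulty is twofold. First, the additive corrections $Ke^{-\mu_1(\cdot)}$ travel with the front at speed $c(\cdot)$, so in the comoving variable $\xi=i-\int_0^t c(s)\,ds$ their contribution to the ratio $\ol V_i/\ol u_i$ is essentially time-independent and can exceed $1+\epsilon$ in a band of intermediate $\xi$; making these corrections genuinely dissipate in time—while preserving the super-/sub-solution inequalities across the interface, where the effective damping rate $a(\theta_t\omega)\ol u_i$ is neither uniformly positive nor negligible—is the crux, and is exactly the point at which the \emph{precise} exponential rate $\mu$ of the constructed front and the matching hypothesis $\lim_{i\to\infty}u^0_i/u_i(0;\omega)=1$ are indispensable. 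Second, all estimates must hold uniformly along the whole random orbit $\{\theta_t\omega\}_{t\in\R}$; for this the randomness enters only through $\omega\in\Omega_0$, via the strict positivity of the least mean $\underline a$ (which yields $\Gamma(t)\to1$ and $\gamma(t)\to1$) and the uniform-in-$t$ edge asymptotics $\ol u_i(t)/e^{-\mu(i-\int_0^t c(s)\,ds)}\to1$ furnished by Theorem~\ref{exist-thm}. I would expect the resolution to follow the scheme of \cite{Sh18}, adapted to the lattice by replacing spatial derivatives of the profile (unavailable here) with the discrete ratios $\ol u_{i\pm1}/\ol u_i$, which stay bounded precisely because of the exponential tail.
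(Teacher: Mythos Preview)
Your freezing observation is correct and is indeed the starting point of the paper's proof as well: setting
\[
\alpha(t)=\inf\Bigl\{\alpha\ge1:\ \tfrac{1}{\alpha}\le\tfrac{u_i(t;u^0,\omega)}{u_i(t;\omega)}\le\alpha\ \text{for all }i\Bigr\},
\]
the fact that $\lambda\ol u$ is a super-solution for $\lambda\ge1$ (and a sub-solution for $\lambda\le1$) makes $\alpha(t)$ non-increasing. Your edge analysis is also essentially the paper's: using the matched-tail hypothesis and barriers of the form $(1\pm\epsilon)e^{-\mu\xi}\pm d\,e^{A_\omega(t)-\tilde\mu\xi}$ with $\mu<\tilde\mu<\min\{2\mu,\mu^*\}$, one gets $\tfrac{1}{\alpha}\le\psi_i(t)\le\alpha$ uniformly in $t\ge0$ once $i-\int_0^tc\ge I_\alpha$, for any fixed $\alpha>1$.

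The gap is exactly where you locate it, and it is real: your additive corrections live in the comoving frame and do \emph{not} decay, so your barriers never force $\psi_i(t)$ into $[1-\epsilon,1+\epsilon]$ on the whole lattice at any finite time. No choice of $\mu_1$, $K$, $\Gamma$, $\gamma$ in your scheme resolves the intermediate band, because there the damping $a(\theta_t\omega)\ol u_i$ is $O(1)$ but the barrier excess is also $O(1)$ and time-independent. You defer this to ``the scheme of \cite{Sh18}'', but that scheme is \emph{not} a refined barrier construction; it is a different mechanism that the paper imports verbatim.

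The paper argues by contradiction: assume $\alpha_\infty:=\lim_{t\to\infty}\alpha(t)>1$ and fix $1<\alpha<\alpha_\infty$. The edge estimate above gives $I_\alpha$, and then one sets $m_\alpha:=\alpha_0^{-1}\inf\{\ol u_i(t):\ i-\int_0^tc\le I_\alpha,\ t\ge0\}>0$, so that both $v$ and $\ol u$ are $\ge m_\alpha$ in the bulk. The key is a differential inequality on the scale $T$ chosen so that $\tfrac{\underline a}{2}T\le\int_\tau^{\tau+T}a(\theta_s\omega)\,ds\le 2\ol aT$: writing $W_k=e^{\delta\int_{kT}^{t+kT}a}\,v(t+kT)$ and $\alpha_k=\alpha(kT)$, one computes
\[
\tfrac{d}{dt}(\alpha_kV_k)-H(\alpha_kV_k)-a_k(\alpha_kV_k)(1-\alpha_kV_k)
= a_k(\alpha_kV_k)\bigl[(\alpha_k-1)V_k\bigr],
\]
and in the bulk $(\alpha_k-1)V_k\ge(\alpha_\infty-1)m_\alpha$, which for $\delta$ small is strictly larger than the error $(1-e^{-2\delta T\ol a})\alpha_0V_k+\delta$ introduced by the factor $e^{\delta\int a}$. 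Thus $\alpha_kV_k$ dominates $W_k$ in the bulk; in the edge one already has $\psi\le\alpha\le e^{-\delta\int a}\alpha_k$; and comparison yields $\alpha((k+1)T)\le e^{-\delta\int_{kT}^{(k+1)T}a}\,\alpha(kT)$. Iterating and using $\int_0^\infty a=\infty$ forces $\alpha_\infty\le0$, a contradiction. In short, the missing idea is not to make barriers decay but to exploit the strict positivity of $(\alpha_\infty-1)\,m_\alpha$ in the bulk as a contraction of the \emph{sandwich constant itself}.
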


The rest of the paper is organized as follows. In Section 2, we establish the comparison principle for sub-solutions and super-solutions of KPP-type lattice random equations \eqref{main-eqn} and stability of the positive constant equilibrium solution. Also, we give in Section 2 some results including the technical lemmas for the use in later section. We investigate the existence and stability of random traveling waves for KPP-type lattice equations in random media and prove Theorem \ref{exist-thm} and \ref{stable-thm} in Section 3.

\section{Preliminary}

In this section, we present some preliminary materials to be used in later sections. We first present a comparison principle for sub-solutions and super-solutions of \eqref{main-eqn}. Then we prove the stability of the positive constant equilibrium solution $u=1$ and the convergence of solutions on compact subsets. Finally we present some technical lemmas.

Consider now the following space continuous version of \eqref{main-eqn},
\begin{equation}\label{main-eqn2}
\pa_tv(x,t)=Hv(x,t)+a(\theta_t\omega)v(x,t) (1-v(x,t)), \quad\quad x\in \R,\, t\in\R,\ \omega\in\Omega,
\end{equation}
where  $$Hv(x,t)=v(x+1,t)+v(x-1,t)-2v(x,t), \quad x\in \R,\, t\in\R.$$
Recall
$$
l^\infty(\Z)=\{u:\Z\to\R\,:\, \sup_{x\in\Z}|u(x)|<\infty\}.
$$
Let
$$
l^\infty(\R)=\{u:\R\to\R\,:\, \sup_{x\in\R}|u(x)|<\infty\}
$$
with norm $\|u\|=\sup_{x\in\R}|u(x)|$.
Let  $$ l^{\infty,+}(\Z)=\{u\in l^{\infty}(\Z):\inf_{i\in\Z}u_i\ge 0\},\quad l^{\infty,+}(\R)=\{u\in l^\infty(\R)\,:\, \inf_{x\in\R}u(x)\ge 0\}$$

For any $u_0\in l^\infty(\R)$, let $u(x,t;u_0,\omega)$ be the solution of \eqref{main-eqn2} with $u(x,0;u_0,\omega)=u_0(x)$. Recall that
for any $u^0\in l^\infty(\Z)$, $u(t;u^0,\omega)=\{u_i(t;u^0,\omega)\}_{i\in\Z}$ is the solution of \eqref{main-eqn} with $u_i(0;u^0,\omega)=u^0_i$ for $i\in\Z$.

A function $v(x,t;\omega)$ on $\R\times[0,T)$ which is continuous in $t$
is called a {\it super-solution} or {\it sub-solution} of \eqref{main-eqn2} (resp. \eqref{main-eqn}) if for a.e. $\omega\in\Omega$ and any given $x\in\R$ (resp. $x\in\Z$), $v(x,t;\omega)$ is  absolutely continuous
in  $t\in [0,T)$,  and
$$ v_t(x,t;\omega)\ge Hv(x,t;\omega)+a(\theta_t\omega)v(x,t;\omega) (1-v(x,t;\omega))\quad {\rm for}\quad  \,\, t\in [0,T)$$
or
$$ v_t(x,t;\omega)\le  Hv(x,t;\omega)+a(\theta_t\omega)v(x,t;\omega) (1-v(x,t;\omega))\quad {\rm for}\quad  \,\, t\in[0,T).$$

\begin{proposition}[Comparison principle]
\label{comparison}
\begin{itemize}
\item[(1)]
If  $u_1(x,t;\omega)$ and $u_2(x,t;\omega)$ are bounded sub-solution and super-solution of \eqref{main-eqn2} $($resp. \eqref{main-eqn}$)$ on $[0,T)$, respectively, and $u_1(\cdot,0;\omega)\leq u_2(\cdot,0;\omega)$, then $u_1(\cdot,t;\omega)\leq u_2(\cdot,t;\omega)$
for $t\in[0,T)$.

\item[(2)]  Suppose that $u_1(x,t;\omega)$, $u_2(x,t;\omega)$ are bounded and satisfy that for any given $x\in\R$ $($resp. $x\in\Z)$,
 $u_1(x,t;\omega)$ and $u_2(x,t;\omega)$ are absolutely continuous in $t\in[0,\infty)$, and
 \begin{align*}
 &\pa_t u_2(x,t;\omega)-(Hu_2(x,t;\omega)+a(\theta_t\omega)u_2(x,t;\omega) (1-u_2(x,t;\omega)))\\
 &>\pa_t u_1(x,t;\omega)-(Hu_1(x,t;\omega)+a(\theta_t\omega)u_1(x,t;\omega) (1-u_1(x,t;\omega)))
 \end{align*}
 \item[]for $t>0$. Moreover, suppose that $u_2(\cdot,0;\omega)\geq u_1(\cdot,0;\omega)$. Then $u_2(\cdot,t;\omega)>u_1(\cdot,t;\omega)$ for  $t>0$.

\item[(3)] If $u_0\in l^{\infty,+}(\R)$ $($resp. $u^0\in l^{\infty,+}(\Z))$, then $u(x,t;u_0,\omega)$ $($resp. $u(t;u^0,\omega))$ exists and
$u(\cdot,t;u_0,\omega)\ge 0$ $($resp. $u(t;u^0,\omega)\ge 0)$ for all $t\ge 0$.
\end{itemize}
\end{proposition}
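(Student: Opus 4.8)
The plan is to route all three parts through a single structural feature of the nonlocal diffusion operator: writing $H=A-2I$ with $(Au)(x)=u(x+1)+u(x-1)$, the operator $A$ is bounded and positivity preserving on $l^\infty(\R)$ (resp.\ on $l^\infty(\Z)$), and $H$ annihilates spatially constant fields. Part (1) is the crux; parts (2) and (3) will be deduced from it. For part (1) I would set $w=u_2-u_1$. Subtracting the super- and sub-solution inequalities and using $u_2(1-u_2)-u_1(1-u_1)=(1-u_1-u_2)w$ gives, for a.e.\ $t$ and every $x$,
\[
\pa_t w\ge Hw+b(x,t)\,w,\qquad b(x,t):=a(\theta_t\omega)\bigl(1-u_1(x,t)-u_2(x,t)\bigr).
\]
On each $[0,T']$ with $T'<T$, continuity of $t\mapsto a(\theta_t\omega)$ and boundedness of $u_1,u_2$ make $b$ bounded, say $|b|\le M$. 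The substitution $W=e^{-\beta t}w$ with $\beta=M+1$ removes the zeroth order term, turning this into $\pa_tW\ge HW+c(x,t)W$ with $c:=b-\beta\le-1$, $W$ bounded and $W(\cdot,0)\ge0$; it then suffices to prove $W\ge0$ on $[0,T']$.

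To prove $W\ge 0$ I would use a positive evolution family. For $\lambda\ge\|c\|_\infty+2$ the operator $H+c(\cdot,t)+\lambda I=A+(c(\cdot,t)-2+\lambda)I$ is a sum of two positivity-preserving bounded operators, hence positive; therefore the uniformly bounded, measurable-in-$t$ family $H+c(\cdot,t)$ generates a positivity-preserving evolution family $U(t,s)\ge0$ (its Dyson series has only nonnegative terms). Rewriting the inequality as $\pa_tW=(H+c)W+g$ with nonnegative defect $g:=\pa_tW-(H+c)W\ge0$, the variation-of-constants formula
\[
W(t)=U(t,0)W(\cdot,0)+\int_0^t U(t,s)\,g(s)\,ds
\]
exhibits $W(t)$ as a nonnegative combination of the nonnegative data and the nonnegative forcing, whence $W\ge0$. \textbf{The main obstacle sits precisely here}: because the spatial domain is unbounded, the naive ``evaluate at a minimum'' argument breaks down, since the infimum over $\R$ (or $\Z$) need not be attained. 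One must genuinely exploit positivity of $U(t,s)$---or, as an alternative, a perturbation argument: replace $W$ by $W+\eta t$ to make the inequality strict, use the one-sided bound $\pa_tW\ge-C_0$ to confine any putative negative infimum to times bounded away from $0$, and derive a contradiction along a minimizing sequence where $HW_\eta$ tends to a nonnegative limit while $cW_\eta\ge0$. The delicate point to treat carefully is the justification of the Duhamel representation for the merely (pointwise-in-$x$) absolutely continuous weak sub/super-solutions.

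For part (2), with $w=u_2-u_1$ the hypothesis reads $\pa_tw=Hw+bw+g$ with a strictly positive defect $g>0$ and the same bounded $b$; discarding $g$ leaves $\pa_tw\ge Hw+bw$, $w(\cdot,0)\ge0$, so part (1) already gives $w\ge0$. To upgrade to strict positivity, suppose $w(x_0,t_0)=0$ with $t_0>0$. Since $w\ge0$ we have $Hw(x_0,t)\ge-2w(x_0,t)$, so $\phi(t):=w(x_0,t)\ge0$ satisfies $\phi'(t)\ge-(M+2)\phi(t)+g(x_0,t)$ a.e., with $g(x_0,\cdot)>0$. Then $e^{(M+2)t}\phi(t)$ has a.e.\ positive derivative and is strictly increasing, so $e^{(M+2)t_0}\phi(t_0)>\phi(0)\ge0$, forcing $\phi(t_0)>0$---contradicting $\phi(t_0)=0$. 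Hence $w>0$ for all $t>0$.

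Finally, part (3) is routine given part (1). Here $H$ is a bounded linear operator on $l^\infty(\R)$ (resp.\ $l^\infty(\Z)$) and $u\mapsto a(\theta_t\omega)u(1-u)$ is locally Lipschitz with locally continuous coefficient in $t$, so \eqref{main-eqn2} (resp.\ \eqref{main-eqn}) is a Banach-space ODE and Picard--Lindel\"of yields a unique local solution. The field $u\equiv0$ is an exact, hence sub-, solution lying below $u_0$, so part (1) gives $u(\cdot,t)\ge0$ on the existence interval. For an a priori upper bound, let $\bar u(t)$ solve the spatially homogeneous logistic ODE $\dot{\bar u}=a(\theta_t\omega)\bar u(1-\bar u)$ with $\bar u(0)=\|u_0\|$; as a constant-in-space field it is an exact, hence super-, solution dominating $u_0$, so part (1) gives $0\le u(\cdot,t)\le\bar u(t)\le\max\{1,\|u_0\|\}$. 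This uniform bound precludes finite-time blow-up and extends the solution to all $t\ge0$.
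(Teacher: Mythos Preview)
Your argument is correct, but for part~(1) it takes a different route from the paper. The paper sets $Q=e^{ct}(u_2-u_1)$ with $c$ chosen so large that $p:=b-2+c>0$, passes to the integral form
\[
Q(x,t)-Q(x,0)\ \ge\ \int_0^t\bigl[Q(x+1,s)+Q(x-1,s)+p(x,s)Q(x,s)\bigr]\,ds,
\]
and then runs a short-time contradiction: on $[0,T_0]$ with $T_0<1/(2+\|p\|_\infty)$, a minimizing sequence $(x_n,t_n)$ with $Q(x_n,t_n)\to Q_{\inf}<0$ yields $Q_{\inf}\ge T_0(2+\|p\|_\infty)Q_{\inf}>Q_{\inf}$. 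This is elementary and, crucially, uses only the pointwise-in-$x$ integral inequality, so the $x$-dependent null sets in ``absolutely continuous in $t$'' never enter. Your primary approach via a positivity-preserving evolution family and Duhamel is conceptually clean and buys you a reusable abstract tool, but---as you rightly flag---the step $W(t)=U(t,0)W(0)+\int_0^tU(t,s)g(s)\,ds$ needs $g(\cdot,s)\in l^\infty$ for a.e.\ $s$, which is not immediate over $x\in\R$ since $\partial_tW(x,\cdot)$ may fail on an $x$-dependent null set; the paper's integral-inequality argument sidesteps this completely. Your ``alternative'' perturbation $W\mapsto W+\eta t$ together with a minimizing sequence is in fact quite close in spirit to the paper's method. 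For part~(2) the paper argues via the strict integral inequality $Q(x,t)>Q(x,0)+\int_0^t(\cdots)\,d\tau\ge Q(x,0)\ge0$; your ODE-at-a-point argument using $Hw(x_0,t)\ge-2w(x_0,t)$ is a nice variant and equally valid. Part~(3) matches the paper.
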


\begin{proof} We prove the proposition for \eqref{main-eqn2}. It can be proved similarly for \eqref{main-eqn}.

\smallskip

(1) We prove (1)  by modifying the arguments of \cite[Proposition 2.4]{HuShVi08}.

Let $Q(x,t;\omega)=e^{ct}(u_2(x,t;\omega)-u_1(x,t;\omega))$, where $c:=c(\omega)$ is to be determined later. Then there is a measurable subset $\bar{\Omega}$ of $\Omega$ with $\mathbb{P}(\bar{\Omega})=0$ such that for any $\omega\in\Omega\setminus\bar\Omega$, we have
\begin{align}
\label{difference}
\partial _tQ( x,t;\omega)
=&e^{ct}( \partial _tu_2( x,t;\omega ) -\partial _tu_1( x,t;\omega)) +ce^{ct}( u_2( x,t;\omega) -u_1( x,t;\omega))\nonumber\\
\ge& e^{ct}( Hu_2( x,t;\omega) -Hu_1( x,t;\omega)  +a( \theta _t\omega ) u_2( x,t;\omega)( 1-u_2( x,t;\omega)) \nonumber\\
& -a(\theta_t\omega)u_1( x,t;\omega)(1-u_1( x,t;\omega))) +cQ(x,t;\omega) \nonumber \\
=&HQ( x,t;\omega) +e^{ct}a(\theta _t\omega)(u_2( x,t;\omega) -u_1( x,t;\omega))( 1-u_2( x,t;\omega))\nonumber\\
&  -e^{ct}a( \theta _t\omega)( u_2( x,t;\omega) -u_1( x,t;\omega )) u_1( x,t;\omega ) +cQ( x,t;\omega)\nonumber\\
=&Q( x+1,t;\omega) +Q( x-1,t;\omega) +( b( x,t;\omega ) -2+c ) Q( x,t;\omega)
\end{align}
for  $ x\in\R$ and $t\in[0,T]$, where
$$ b( x,t;\omega) =a(\theta _t\omega)( 1-u_1( x,t;\omega ) -u_2( x,t;\omega ) ) \ \ \text{for\ }x\in \mathbb{R},\ t\in [ 0,T ] . $$
Let $p(x,t;\omega)=b(x,t;\omega)-2+c$. By the boundedness of $u_1$ and $u_2$, we can choose $c=c(\omega)>0$ such that
$$\inf _{(x,t)\in\R\times [0,T]}p(x,t;\omega)>0.$$
We claim that  $Q(x,t;\omega)\geq 0$ for $x\in\R$ and $t\in[0,T]$.

Let $p_0(\omega)=\sup\limits_{(x,t)\in\R\times [0,T]}p(x,t;\omega)$. It suffices to prove the claim for $x\in\R$ and $t\in(0,T_0]$ with $T_0=\min(T,\frac{1}{p_0(\omega)+2})$. Assume that there are $\tilde x\in\R$ and $\tilde t\in(0,T_0]$ such that $Q(\tilde x,\tilde t;\omega)<0$. Then there is $t^0\in(0,T_0)$ such that
$$Q_{\inf}(\omega):=\inf_{(x,t)\in\R\times [0,t^0]}Q(x,t;\omega)<0.$$ Observe that there are $x_n\in\R$ and $t_n\in(0,t^0]$ such that
$$Q(x_n,t_n;\omega)\to Q_{\inf}(\omega)\quad\mbox{ as }\,n\to\infty.$$
By \eqref{difference}  and the fundamental theorem of calculus for Lebesgue integrals, we get
\begin{align}
Q(x_n,t_n;\omega)-Q(x_n,0;\omega)
&\geq\int^{t_n}_0[Q(x_n+1,t;\omega)+Q(x_n-1,t;\omega)+p(x_n,t;\omega)Q(x_n,t;\omega)]dt \nonumber\\
&\geq\int^{t_n}_0[2Q_{\inf}(\omega)+p(x_n,t;\omega)Q_{\inf}(\omega)]dt \nonumber\\
&\geq t^0(2+p_0(\omega))Q_{\inf}(\omega)\quad\quad\mbox{ for }\,n\geq1.\nonumber
\end{align}
Note that $Q(x_n,0;\omega)\geq0$, we then have
$$Q(x_n,t_n;\omega)\geq t^0(2+p_0(\omega))Q_{\inf}(\omega)\quad\quad\mbox{ for }\,n\geq1.$$
Letting $n\to\infty$, we obtain
$$Q_{\inf}(\omega)\geq t^0(2+p_0(\omega))Q_{\inf}(\omega)>Q_{\inf}(\omega).$$
A contradiction. Hence the claim is true and $u_1(x,t;\omega)\leq u_2(x,t;\omega)$ for $\omega\in\Omega\setminus\bar\Omega$, $x\in\R$ and $t\in[0,T]$.

(2) For $ \omega\in\Omega\setminus\bar\Omega$, by the similar arguments as getting \eqref{difference}, we can find $c(\omega)$, $\mu(\omega)>0$ such that
$$\pa_t Q(x,t;\omega)>Q(x+1,t;\omega)+Q(x-1,t;\omega)+\mu(\omega) Q(x,t;\omega)\quad\mbox{ for }\, x\in\R, \, t>s,$$
where $Q(x,t;\omega)=e^{c(\omega)t}(u_2(x,t;\omega)-u_1(x,t;\omega))$. Thus we have that for  $x\in\R$,
$$
Q(x,t;\omega)>Q(x,0;\omega)+\int_0 ^t \big(Q(x+1,\tau;\omega)+Q(x-1,\tau;\omega)+\mu(\omega) Q(x,\tau;\omega)\big)d\tau.
$$
By the arguments in (1), $Q(x,t;\omega)\ge 0$ for all $x\in\R$ and $t\ge 0$. It then follows that
$Q(x,t;\omega)>Q(x,0;\omega)\ge 0$ and hence $u_2(x,t;\omega)>u_1(x,t;\omega)$ for $\omega\in\Omega\setminus\bar\Omega$, $x\in\R$ and $t>0$.

(3) By (1), for any $u_0\in l^{\infty,+}(\R)$, $0\le u(\cdot,t;u_0,\omega)\le \max\{\|u_0\|,1\}$ for all $t>0$ in the existence interval of $u(\cdot,t;u_0,\omega)$. It then follows that $u(\cdot,t;u_0,\omega)$ exists and $u(\cdot,t;u_0,\omega)\ge 0$ for all $t\ge 0$.
\end{proof}

We have the following proposition on the stability of the constant equilibrium solution $u=1$.
\begin{proposition}
\label{stable-lemma}
For every $u_0\in l^{\infty}( \mathbb{R} )$ with $ \underset{x\in \mathbb{R}}{\inf}u_{0}(x)>0 $ and for every $ \omega \in \Omega $, we have that $$ \lVert u( x, t;u_0,\omega ) -1 \rVert _{\infty}\rightarrow 0\ \ \,as\ \ \,\, t\rightarrow \infty. $$
\end{proposition}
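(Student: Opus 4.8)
The plan is to trap the solution $u(x,t;u_0,\omega)$ between two spatially homogeneous solutions of \eqref{main-eqn2} and to analyze those via the scalar logistic equation. The key observation is that if $\eta(t)$ depends only on $t$, then $H\eta(t)=\eta(t)+\eta(t)-2\eta(t)=0$, so such an $\eta$ solves \eqref{main-eqn2} if and only if it solves
$$\dot\eta(t)=a(\theta_t\omega)\,\eta(t)(1-\eta(t)).$$
In particular any such $\eta$ is simultaneously a sub-solution and a super-solution of \eqref{main-eqn2}, so the comparison principle of Proposition \ref{comparison} applies to it.

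First I would set $\alpha:=\inf_{x\in\R}u_0(x)>0$ and $\beta:=\max\{\sup_{x\in\R}u_0(x),1\}$, so that $0<\alpha\le u_0(x)\le\beta$ with $\beta\ge1$ for all $x\in\R$. Let $\underline\eta$ and $\overline\eta$ solve the logistic ODE above with $\underline\eta(0)=\alpha$ and $\overline\eta(0)=\beta$. Since these are $x$-independent solutions of \eqref{main-eqn2} and $\underline\eta(0)\le u_0(\cdot)\le\overline\eta(0)$, Proposition \ref{comparison}(1) yields
$$\underline\eta(t)\le u(x,t;u_0,\omega)\le\overline\eta(t)\qquad\text{for all }x\in\R,\ t\ge0.$$
Because both bounds are independent of $x$, it then suffices to show $\underline\eta(t)\to1$ and $\overline\eta(t)\to1$ as $t\to\infty$; the conclusion follows from the uniform estimate $\|u(\cdot,t;u_0,\omega)-1\|_\infty\le\max\{1-\underline\eta(t),\,\overline\eta(t)-1\}$.

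To handle the ODE I would integrate it explicitly. Writing $A(t;\omega)=\int_0^t a(\theta_s\omega)\,ds$, separation of variables gives, for any initial value $\eta_0>0$,
$$\eta(t)=\frac{\eta_0\,e^{A(t;\omega)}}{(1-\eta_0)+\eta_0\,e^{A(t;\omega)}},$$
so that $\eta(t)\to1$ precisely when $A(t;\omega)\to\infty$. Thus the whole matter reduces to the single fact that $\int_0^t a(\theta_s\omega)\,ds\to\infty$ as $t\to\infty$. This is where {\bf (H)} enters: taking $s=0$ in the definition of the least mean, $\liminf_{t\to\infty}\tfrac1t\int_0^t a(\theta_s\omega)\,ds\ge\underline a(\omega)>0$ (indeed $=\underline a>0$ for $\omega\in\Omega_0$), so $\tfrac1t\int_0^t a(\theta_s\omega)\,ds$ stays bounded below by a positive constant for all large $t$, whence $A(t;\omega)\to\infty$. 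Applying the explicit formula to $\eta_0=\alpha$ and $\eta_0=\beta$ then gives $\underline\eta(t),\overline\eta(t)\to1$, completing the proof.

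I expect the only genuine obstacle to be this last step, namely deducing $A(t;\omega)\to\infty$ cleanly from the positivity of the least mean $\underline a(\omega)$; the remaining ingredients (the reduction to the logistic ODE, the comparison-principle squeeze, and the explicit solution of the ODE) are routine. I would also note that the stated conclusion really requires only the full-measure set on which $\underline a(\omega)>0$, i.e.\ $\omega\in\Omega_0$, which is exactly where the lower bound on the time averages is available.
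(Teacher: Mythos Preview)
Your proof is correct and takes essentially the same route as the paper's: both trap $u(\cdot,t;u_0,\omega)$ between two spatially homogeneous solutions, reduce to the scalar logistic ODE $\dot\eta=a(\theta_t\omega)\eta(1-\eta)$, and conclude convergence to $1$ from $\int_0^t a(\theta_s\omega)\,ds\to\infty$. The only cosmetic difference is that the paper, rather than writing the explicit solution, records the equivalent conserved quantity $(1/\eta(t)-1)\,e^{\int_0^t a(\theta_s\omega)\,ds}$; your observation that the divergence of $A(t;\omega)$ is really only guaranteed for $\omega\in\Omega_0$ is accurate and in fact sharper than the paper's own statement.
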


\begin{proof}
The proof is similar to that of \cite[Theorem 1.1]{Sh18}. We give the details for completeness.

For $ u_0\in l^{\infty}(\mathbb{R})$ with $ \underset{x\in \mathbb{R}}{\inf} u_0(x)>0 $ and $ \omega \in \varOmega $. Let $\underline{u}_0:=\text{min}\{1, \underset{x\in \mathbb{R}}{\inf}u_0(x)\}$ and $\overline{u}_0:=\text{max}\{1, \underset{x\in \mathbb{R}}{\sup} u_0(x)\}$. It follows from Proposition \ref{comparison} that
\begin{equation}
\label{stable-eqn1}
\underline{u}_0\leq u(x,t;\underline{u}_0,\omega)\leq \text{min}\{1, u(x,t;u_0,\omega)\},\ \ \ \forall x\in\mathbb{R},\  t\geq 0
\end{equation}
and
\begin{equation}
\label{stable-eqn2}
\text{max}\{1, u(x,t;u_0,\omega)\}\leq u(x,t;\overline{u}_0,\omega)\leq \overline{u}_0,\ \ \ \forall x\in\mathbb{R},\  t\geq 0.
\end{equation}
Note that  $\underline{u}_0$ and $\overline{u}_0$ are constants. Then by the uniqueness of solution of \eqref{main-eqn2} with respect to the initial value, we obtain that
$$u(x,t;\underline{u}_0,\omega)=u(0,t;\underline{u}_0,\omega)\ \ \text{and} \ \ u(x,t;\overline{u}_0,\omega)=u(0,t;\overline{u}_0,\omega)\ \ \ \forall x\in\mathbb{R},\  t\geq 0.$$
Since the functions $\underline{u}(t) =\left( \frac{1}{u( 0,t;\underline{u}_0,\omega)}-1 \right) e^{\int_0^t{a( \theta _s\omega) ds}}$ and
$\overline{u}(t) =\left( 1-\frac{1}{u( 0,t;\overline{u}_0,\omega)}\right) e^{\int_0^t{a(\theta _s\omega) ds}}$ satisfy
$$ \frac{d}{dt}\underline{u}=\frac{d}{dt}\overline{u}=0,\ \ t>0,$$
we get that
$$\underline{u}(t)=\underline{u}(0)\ \ \text{and}\ \ \overline{u}(t)=\overline{u}(0),\ \ \forall t\geq 0.$$
Thus,
\begin{equation}
\label{stable-eqn3}
1-u(x,t;\underline{u}_0,\omega)=\underline{u}(0) u(x,t;\underline{u}_0,\omega) e^{-\int_0^t{a( \theta _s\omega) ds}}
\end{equation}
and
\begin{equation}
\label{stable-eqn4}
u(x,t;\overline{u}_0,\omega)-1=\overline{u}(0) u(x,t;\overline{u}_0,\omega) e^{-\int_0^t{a( \theta _s\omega) ds}}.
\end{equation}
By \eqref{stable-eqn1} and \eqref{stable-eqn2}, we have that
$$ 0<\underline{u}_0 \leq u(x,t;\underline{u}_0,\omega)\leq u(x,t;\overline{u}_0,\omega)\leq \overline{u}_0,\ \ \ \forall x\in\mathbb{R},\  t\geq 0.$$
It then follows from \eqref{stable-eqn1}, \eqref{stable-eqn2}, \eqref{stable-eqn3} and \eqref{stable-eqn4} that
$$|u(x,t;u_0,\omega)-1| \leq \overline{u}_0\ \text{max}\{\overline{u}(0),\underline{u}(0)\}e^{-\int_0^t{a( \theta _s\omega) ds}},\ \ \ \forall x\in\mathbb{R},\  t\geq 0.$$
The Lemma thus follows.
\end{proof}

\begin{proposition}
\label{convergence-lemma}
Suppose that $u_{0n},u_0\in l^{\infty,+}(\R)$ $(n=1,2,\cdots)$ with $\{\|u_{0n}\|\}$ being  bounded.
If $u_{0n}(x)\to u_0(x)$ as $n\to\infty$
 uniformly in $x$ on bounded sets,
{ then for each $t>0$}, $u(x,t;u_{0n},\theta_{t_0}\omega)- u(x,t;u_0,\theta_{t_0}\omega)\to 0$ as $n\to\infty$ uniformly in $x$ on bounded sets and $t_0\in\R$.
\end{proposition}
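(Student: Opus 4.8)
The plan is to pass to the linear equation satisfied by the difference of the two solutions, and then combine a Gronwall bound that is uniform in the shift $t_0$ with a spatial localization that exploits the fast decay of the lattice heat kernel (the latter being needed because the data converge only on bounded sets). Write $u_n(x,t)=u(x,t;u_{0n},\theta_{t_0}\omega)$, $u(x,t)=u(x,t;u_0,\theta_{t_0}\omega)$, and set $w_n=u_n-u$, $g_n=u_{0n}-u_0$. By Proposition \ref{comparison}(3) and the boundedness of $\{\|u_{0n}\|\}$ there is a constant $K$ with $0\le u_n,u\le K$ for all $n$, all $t_0$ and all $t\ge 0$; in particular $\|g_n\|_\infty\le 2K$. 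Subtracting the two copies of \eqref{main-eqn2} and using $u_n(1-u_n)-u(1-u)=(1-u_n-u)w_n$, the difference solves the \emph{linear} equation $\pa_t w_n=Hw_n+b(x,t)w_n$ with $b(x,t)=a(\theta_{t+t_0}\omega)(1-u_n-u)$, hence $|b(x,t)|\le\be(t):=(1+2K)\,a(\theta_{t+t_0}\omega)$.

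The first key point is a control of $\int_0^t\be(s)\,ds=(1+2K)\int_{t_0}^{t_0+t}a(\theta_\tau\omega)\,d\tau$ that is uniform in $t_0$. For a.e.\ $\omega$ (in particular $\omega\in\Omega_0$) the greatest mean $\overline a(\omega)$ is finite, so there is $r_0=r_0(\omega)$ with $\int_s^{s+L}a(\theta_\tau\omega)\,d\tau\le(\overline a(\omega)+1)L$ for every $s$ and every $L\ge r_0$. Since $a>0$, enlarging the window to length $\max(t,r_0)$ gives $\sup_{t_0\in\R}\int_{t_0}^{t_0+t}a(\theta_\tau\omega)\,d\tau\le(\overline a(\omega)+1)\max(t,r_0)=:\tilde C(t,\omega)<\infty$, independently of $t_0$. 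This finiteness of the greatest mean is exactly what makes the conclusion uniform in the shift, and it is the step I expect to be the most delicate to pin down.

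Next I would localize. The operator $H$ generates the Markov semigroup $e^{tH}f(x)=\sum_{k\in\Z}p_k(t)f(x+k)$, where $p_k(t)=e^{-2t}I_k(2t)\ge 0$, $\sum_k p_k(t)=1$, and $I_k$ is the modified Bessel function; for fixed $t$ one has $\sum_{|k|>N}p_k(t)\to 0$ super-exponentially as $N\to\infty$. Fix a bounded set $|x|\le R_0$, a time $t>0$ and $\ep>0$, and split by linearity $w_n=w_n^{(1)}+w_n^{(2)}$, where $w_n^{(1)},w_n^{(2)}$ solve the same linear equation with initial data $g_n\chi_{[-R,R]}$ and $g_n\chi_{\{|x|>R\}}$, respectively, with $R>R_0$ to be chosen. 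Duhamel's formula together with Gronwall's inequality (using $|b|\le\be$ and $\|e^{sH}\|_{\infty\to\infty}=1$) gives $\|w_n^{(1)}(\cdot,t)\|_\infty\le\big(\sup_{|x|\le R}|g_n(x)|\big)e^{(1+2K)\tilde C(t,\omega)}$, which tends to $0$ as $n\to\infty$ for each fixed $R$, by the hypothesis that $g_n\to 0$ uniformly on bounded sets.

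For the tail $w_n^{(2)}$, the Duhamel representation written with the nonnegative kernel yields the pointwise inequality $|w_n^{(2)}(x,t)|\le\sum_k p_k(t)|g_n^{(2)}(x+k)|+\int_0^t\be(s)\sum_k p_k(t-s)|w_n^{(2)}(x+k,s)|\,ds$, where $g_n^{(2)}=g_n\chi_{\{|x|>R\}}$. Comparing with the solution $\Theta$ of $\pa_t\Theta=H\Theta+\be(t)\Theta$ started from $|g_n^{(2)}|$ (a Gronwall argument applied to $\sup_x(|w_n^{(2)}|-\Theta)$, whose initial value is $0$, shows $|w_n^{(2)}|\le\Theta$), and using that $\be$ is spatially constant so that $\Theta(x,t)=e^{\int_0^t\be}\sum_k p_k(t)|g_n^{(2)}(x+k)|$, I would obtain for $|x|\le R_0$
\begin{equation*}
|w_n^{(2)}(x,t)|\le 2K\,e^{(1+2K)\tilde C(t,\omega)}\sum_{|k|>R-R_0}p_k(t),
\end{equation*}
which is $<\ep/2$ once $R$ is large, uniformly in $n$, in $t_0$ and in $|x|\le R_0$. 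Choosing first $R$ to control $w_n^{(2)}$ and then $n$ large to control $w_n^{(1)}$ finishes the proof. The main obstacle is the interplay of the two uniformities: ordinary continuous dependence on initial data is routine, but making it simultaneously uniform in the unbounded shift $t_0$ forces the finite-greatest-mean bound $\tilde C(t,\omega)$, while the merely local convergence of the data forces the kernel-tail estimate.
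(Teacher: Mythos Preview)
Your proof is correct, but it proceeds quite differently from the paper's. The paper does not split the initial data or invoke the lattice heat kernel at all; instead it introduces the exponentially weighted space
\[
X(\lambda)=\{u:\R\to\R\ :\ u(\cdot)e^{-\lambda|\cdot|}\in l^\infty(\R)\},\qquad \|u\|_\lambda=\sup_{x}|u(x)|e^{-\lambda|x|},
\]
observes that $H$ generates a semigroup on $X(\lambda)$ with a bound $\|e^{Ht}\|_{X(\lambda)}\le Me^{\alpha t}$, and runs a single Gronwall estimate for $\|v^n(\cdot,t)\|_\lambda$. The localization you achieve by the kernel-tail/near-far splitting is absorbed in one stroke by the weight: since $\{u_{0n}\}$ is bounded and $u_{0n}\to u_0$ locally uniformly, one has $\|u_{0n}-u_0\|_\lambda\to 0$, and then convergence in $X(\lambda)$ immediately gives local uniform convergence of the solutions. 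This is shorter and more streamlined than your argument.

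On the other hand, your treatment of the uniformity in $t_0$ is more careful. The paper's Gronwall step uses $\sup_{t_0\in\R,\,\tau\in[0,t],\,x}|b_n(x,\tau;\theta_{t_0}\omega)|$, which tacitly needs $\sup_{s\in\R}a(\theta_s\omega)<\infty$; you instead bound only the \emph{integral} $\int_0^t\beta(s)\,ds$ via the finite greatest mean $\overline a(\omega)$, which is exactly what hypothesis \textbf{(H)} provides. So your route is a bit longer but more robust on this point, while the paper's weighted-space trick is the cleaner way to convert local convergence of the data into local convergence of the solutions.
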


\begin{proof}
It can be proved by the similar arguments in \cite[Proposition 2.2]{CaSh17}.

Fix any $\omega\in\Omega$. Let $v^n(x,t;\theta_{t_0}\omega)=u(x,t;u_{0n},\theta_{t_0}\omega)-u(x,t;u_0,\theta_{t_0}\omega)$. Then $v^n(x,t;t_0)$ satisfies
\begin{equation*}
v^n_t(x,t;\theta_{t_0}\omega)=H v^n(x,t;\theta_{t_0}\omega)+b_n(x,t;\theta_{t_0}\omega)v^n(x,t;\theta_{t_0}\omega),
\end{equation*} where $b_n(x,t;\theta_{t_0}\omega)=a(\theta_{t+t_0}\omega)(1-u(x,t;u_{0n},\theta_{t_0}\omega)-u(x,t;u_0,\theta_{t_0}\omega)) $.
Observe that $\{ b_n(x,t;\theta_{t_0}\omega) \}_n$ is uniformly bounded.

Take a $\lambda>0$. Let
$$
X(\lambda)=\{u:\R\to\R\,|\, u(\cdot) e^{-\lambda |\cdot|}\in l^\infty(\R)\}
$$
with norm $\|u\|_\lambda=\|u(\cdot)e^{-\lambda |\cdot|}\|_{l^\infty(\R)}$.
Note that $H:X(\lambda)\to X(\lambda)$ generates an analytic  semigroup,
and there are $M>0$ and $\alpha>0$ such that
$$
\|e^{H t}\|_{X(\lambda)}\leq M e^{\alpha t}\quad \forall { t\ge 0}.
$$
Hence,
\begin{align*}
v^n(\cdot,t;\theta_{t_0}\omega)=&e^{ Ht}v^n(\cdot,0;\theta_{t_0}\omega)+\int_0^t e^{ H(t-\tau)}b_n(\cdot,\tau;\theta_{t_0}\omega)v^n(\cdot,\tau;\theta_{t_0}\omega)d\tau
\end{align*}
and
then
\begin{align*}
\|v^n(\cdot,t;\theta_{t_0}\omega)\|_{X(\lambda)}
&\leq M e^{\alpha t}\|v^n(\cdot,0;\theta_{t_0}\omega)\|_{X(\lambda)}\\
&+M\sup_{t_0\in\R,\tau\in[0,t],x\in\R} |b_n(x,\tau;\theta_{t_0}\omega)|\int_0^ t e^{\alpha(t-\tau)}\|v^n(\cdot,\tau;\theta_{t_0}\omega)\|_{X(\lambda)}d\tau.
\end{align*}
By Gronwall's inequality,
$$ \lVert v^n( \cdot ,t;\theta_{t_0}\omega) \rVert _{X( \lambda)}\leq e^{( \alpha +M\sup_{t_0\in \text{R,}\tau \in [ 0,t ] ,x\in \text{R}}|b_n(x,\tau;\theta_{t_0}\omega)| ) t}( M\lVert v^n( \cdot ,0;\theta_{t_0}\omega ) \rVert _{X( \lambda )}). $$
Note that $\|v^n(\cdot,0;\theta_{t_0}\omega)\|_{X(\lambda)}\to 0$ uniformly in $t_0\in\R$. It then follows that
$$
\|v^n(\cdot,t;\theta_{t_0}\omega)\|_{X(\lambda)}\to 0\quad {\rm as}\quad n\to\infty
$$
uniformly in $t_0\in\R$ and then
$$
u(x,t;u_{0n},\theta_{t_0}\omega)- u(x,t;u_0,\theta_{t_0}\omega)\to 0 \quad {\rm as}\quad
n\to\infty
$$
uniformly in $x$ on bounded sets and $t_0\in\R$.
\end{proof}

Now we present some lemmas including the technical results.

\begin{lemma}
\label{L-lemma}
$\underline{a}(\cdot), a(\cdot), \overline{a}(\cdot)\in\L^1(\Omega, \mathcal{F}, \mathbb{P})$. Also $\underline{a}(\omega)$ and $\overline{a}(\omega)$ are independent of $\omega$ for a.e. $\omega\in\Omega$.
\end{lemma}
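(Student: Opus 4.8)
The plan is to treat the two assertions separately, handling the almost-everywhere constancy first (since it requires no integrability) and then deducing integrability of $a$ by a truncation argument that circumvents the usual circularity of the ergodic theorem.

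First I would record that $\underline{a}(\cdot)$ and $\overline{a}(\cdot)$ are measurable (noted in the excerpt via the rational-time characterisation) and that they are $\theta_t$-invariant, i.e.\ $\underline{a}(\theta_t\omega)=\underline{a}(\omega)$ and $\overline{a}(\theta_t\omega)=\overline{a}(\omega)$ for every $t\in\R$. Since $(\Omega,\mathcal{F},\P,\{\theta_t\})$ is ergodic, any invariant measurable function is a.e.\ equal to a constant; hence there are $\underline{a},\overline{a}\in[0,\infty]$ with $\underline{a}(\omega)=\underline{a}$ and $\overline{a}(\omega)=\overline{a}$ for a.e.\ $\omega$, and by {\bf (H)} these constants in fact lie in $(0,\infty)$. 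Being a.e.\ finite constants, $\underline{a}(\cdot)$ and $\overline{a}(\cdot)$ belong to $L^1(\Omega,\mathcal{F},\P)$ because $\P(\Omega)=1$. This settles the constancy claim and the integrability of $\underline{a}$ and $\overline{a}$, leaving only $a(\cdot)\in L^1$.

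For $a(\cdot)\in L^1$ the difficulty is that Birkhoff's ergodic theorem, the natural tool linking the temporal average $\frac1t\int_0^t a(\theta_\tau\omega)\,d\tau$ to the spatial average $\E[a]$, presupposes $a\in L^1$ --- precisely what we are trying to prove. I would break this circularity by truncation: set $a_n:=\min\{a,n\}$, which is bounded and hence in $L^1$ (and jointly measurable, so the continuous-time Birkhoff theorem applies since $a(\theta_t\omega)$ is continuous in $t$ and measurable in $\omega$). Applying Birkhoff to $a_n$ gives, for a.e.\ $\omega$,
$$
\lim_{t\to\infty}\frac1t\int_0^t a_n(\theta_\tau\omega)\,d\tau=\E[a_n].
$$
Because $a_n\le a$ pointwise, the time averages satisfy $\frac1t\int_0^t a_n\,d\tau\le\frac1t\int_0^t a\,d\tau$, so passing to the limit and using $\limsup_{t\to\infty}\frac1t\int_0^t a(\theta_\tau\omega)\,d\tau\le\overline{a}(\omega)$ (which follows from the definition of $\overline{a}$ by restricting to $s=0$) yields $\E[a_n]\le\overline{a}(\omega)=\overline{a}$ at any $\omega$ in the full-measure set where both Birkhoff and the constancy hold.

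Finally, since $a_n\uparrow a$ as $n\to\infty$ with each $a_n\ge0$, the monotone convergence theorem gives $\E[a]=\lim_{n\to\infty}\E[a_n]\le\overline{a}<\infty$, whence $a\in L^1(\Omega,\mathcal{F},\P)$. The only genuinely delicate point is the one just isolated --- the circular dependence on integrability in Birkhoff's theorem --- which the monotone truncation resolves; the remaining care is bookkeeping, namely intersecting the countably many full-measure sets (one per $n$, together with the constancy set $\Omega_0$) and matching the $t-s\to\infty$ definition of $\overline{a}$ with the $s=0$ time average used above.
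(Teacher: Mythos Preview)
Your argument is correct. The paper does not actually prove this lemma in-line: its entire proof is the single sentence ``It follows from \cite[Lemma 2.1]{Sh18},'' deferring to the Salako--Shen paper. Your self-contained approach --- constancy of $\underline{a},\overline{a}$ from invariance plus ergodicity, then integrability of $a$ via monotone truncation $a_n=\min\{a,n\}$ to sidestep the circularity in Birkhoff --- is exactly the kind of direct argument one would expect to underlie the cited reference, and it is sound as written. The only thing the citation buys is brevity; what your version buys is that the reader need not chase an external source for a short and elementary fact.
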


\begin{proof}
 It follows from \cite[Lemma 2.1]{Sh18}.
\end{proof}

\begin{lemma}\label{technical}
Suppose that for $\omega\in\Omega$, $a^\omega(t)=a(\theta_t\omega)\in C(\mathbb{R},(0,\infty))$. Then for a.e. $\omega\in\Omega$,
$$\underline{a} =\underset{A\in W_{loc}^{1,\infty}\left( \mathbb{R} \right) \cap L^{\infty}\left( \mathbb{R} \right)}{\mathrm{sup}}\underset{t \in \mathbb{R}}\essinf ( A^{\prime}+a^{\omega} )(t). $$
\end{lemma}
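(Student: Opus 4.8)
The plan is to prove the identity by establishing two inequalities, writing $a=a^\omega$ throughout and working for a fixed $\omega\in\Omega_0$, so that $\underline{a}(\omega)=\underline{a}$ is the deterministic constant and $\lim_{r\to\infty}\inf_{t-s\ge r}\frac{1}{t-s}\int_s^t a(\tau)\,d\tau=\underline{a}$. This is the only property of $a$ that I expect to use in an essential way.

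For the easy inequality $\underline{a}\ge \sup_A\essinf_{t}(A'+a)$, I would take any admissible $A\in W_{loc}^{1,\infty}(\R)\cap L^\infty(\R)$ and set $\lambda=\essinf_{t\in\R}(A'+a)(t)$ (there is nothing to prove if $\lambda=-\infty$). Then $A'(t)+a(t)\ge\lambda$ for a.e.\ $t$, and since $A$ is locally absolutely continuous, integrating over $[s,t]$ gives $A(t)-A(s)+\int_s^t a(\tau)\,d\tau\ge\lambda(t-s)$. Hence
$$\frac{1}{t-s}\int_s^t a(\tau)\,d\tau\ \ge\ \lambda-\frac{A(t)-A(s)}{t-s}\ \ge\ \lambda-\frac{2\|A\|_\infty}{t-s}.$$
Taking $\liminf$ as $t-s\to\infty$ yields $\underline{a}\ge\lambda$, and taking the supremum over $A$ gives the claim. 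This direction uses only local integrability of $a$ and boundedness of $A$.

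For the reverse inequality $\underline{a}\le \sup_A\essinf_{t}(A'+a)$, I would construct a near-optimal competitor. For $\epsilon>0$ set $G(t)=\int_0^t\big(a(\tau)-\underline{a}+\epsilon\big)\,d\tau$, let $M(t)=\sup_{s\le t}G(s)$ be its running supremum, and define
$$A_\epsilon(t):=M(t)-G(t)=\sup_{s\le t}\int_s^t\big(\underline{a}-\epsilon-a(\tau)\big)\,d\tau.$$
Taking $s=t$ shows $A_\epsilon\ge 0$. To see $A_\epsilon$ is bounded above I would invoke the least-mean property: pick $R$ with $\int_s^t a\ge(\underline{a}-\frac{\epsilon}{2})(t-s)$ whenever $t-s\ge R$, so that $\int_s^t(a-\underline{a}+\epsilon)\,d\tau\ge\frac{\epsilon}{2}(t-s)\ge 0$ on long intervals, while on short intervals ($t-s<R$) positivity of $a$ gives $\int_s^t(a-\underline{a}+\epsilon)\,d\tau\ge -\underline{a}R$; hence $\inf_{s\le t}\int_s^t(a-\underline{a}+\epsilon)\,d\tau\ge-\underline{a}R$ uniformly in $t$, i.e.\ $0\le A_\epsilon\le\underline{a}R$. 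Since $a$ is continuous and therefore locally bounded, $G$ is locally Lipschitz, and so is its running maximum $M$, whence $A_\epsilon\in W_{loc}^{1,\infty}(\R)\cap L^\infty(\R)$. Finally $M$ is nondecreasing, so $M'\ge 0$ a.e., and almost everywhere $A_\epsilon'+a=M'-G'+a=M'+\underline{a}-\epsilon\ge\underline{a}-\epsilon$. Thus $\essinf_t(A_\epsilon'+a)\ge\underline{a}-\epsilon$, and letting $\epsilon\to 0$ gives $\sup_A\essinf_t(A'+a)\ge\underline{a}$.

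The main obstacle is the reverse direction, and precisely the global boundedness of $A_\epsilon$: this is where the definition of the least mean and the strict margin $\epsilon$ are indispensable, since with $\epsilon=0$ the running supremum $M$ may fail to be bounded along sequences realizing the $\liminf$. A secondary technical point is the calculus of the running supremum of a locally Lipschitz function, namely that $M$ is locally Lipschitz with $M'\ge 0$ a.e.\ and that $A_\epsilon'=M'-G'$ a.e.; both follow from the local Lipschitz bound $0\le M(t_2)-M(t_1)\le L(t_2-t_1)$ obtained from the local Lipschitz constant $L$ of $G$. Everything else reduces to the fundamental theorem of calculus and to applying the characterization of $\underline{a}$ on the full-measure set $\Omega_0$.
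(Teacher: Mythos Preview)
Your proof is correct. The paper does not actually prove this lemma; it simply cites \cite[Lemma~2.2]{Sh18} together with its own Lemma~\ref{L-lemma}, so there is no argument in the paper to compare against at the level of detail you give. Your self-contained proof is precisely the standard one underlying that citation: the easy inequality by integrating $A'+a\ge\lambda$ and exploiting $\|A\|_\infty<\infty$, and the reverse inequality by the running-supremum competitor $A_\epsilon=M-G$ built from $G(t)=\int_0^t(a-\underline a+\epsilon)$, with boundedness of $A_\epsilon$ coming from the $\epsilon$-margin over the least mean on long intervals and positivity of $a$ on short ones. Both steps are sound; the only cosmetic point is that in the short-interval estimate you may as well assume $0<\epsilon<\underline a$ so that the bound $-\underline aR$ is immediate.
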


\begin{proof}
It follows from \cite[Lemma 2.2] {Sh18} and Lemma \ref{L-lemma}.
\end{proof}

\begin{lemma}
\label{sub-solution-lemma}
Let $\omega\in\Omega_0$. Then for any $\mu$, $\tilde{\mu}$ with $ 0<\mu <\tilde{\mu}<\min\{ 2\mu ,\mu ^* \},$
there exist $ \{  t_k \} _{k\in \mathbb{Z}} $ with $ t_k<t_{k+1} $
and $ \underset{k\rightarrow \pm \infty}{\lim}t_k=\pm \infty  $, $ A_{\omega}\in W_{loc}^{1,\infty}( \mathbb{R}) \cap L^{\infty}( \mathbb{R})$ with $ A_{\omega}(\cdot) \in C^1(( t_k,t_{k+1}))  $ for $ k\in \mathbb{Z} $, and $ d_{\omega}>0 $ such that for any $ d\ge d_{\omega} $  the function
$$ \tilde{v}^{\mu ,d,A_{\omega}}( x,t, \omega) :=e^{-\mu ( x-\int_0^t{c( s ;\omega ,\mu ) ds} )}-de^{( \frac{\tilde{\mu}}{\mu}-1 ) A_{\omega}( t ) -\tilde{\mu}( x-\int_0^t{c( s ;\omega ,\mu ) ds} )} $$
satisfies
$$ \partial _t \tilde{v}^{\mu ,d,A_{\omega}}\le H\tilde{v}^{\mu ,d,A_{\omega}}+a( \theta _t\omega ) \tilde{v}^{\mu ,d,A_{\omega}}( 1-\tilde{v}^{\mu ,d,A_{\omega}} )  $$
for $ t\in ( t_k,t_{k+1} )$, $ x\ge \int_0^t{c( s ;\omega ,\mu ) ds}+\frac{\ln d}{\tilde{\mu}-\mu}+\frac{A_{\omega}\left( t \right)}{\mu} $, $ k\in \mathbb{Z} $.
 \end{lemma}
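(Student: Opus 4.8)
The plan is to write $\tilde v^{\mu,d,A_\omega}=\phi_1-\phi_2$ with $\phi_1(x,t)=e^{-\mu\xi}$ and $\phi_2(x,t)=d\,e^{\beta A_\omega(t)-\tilde\mu\xi}$, where $\xi=\xi(x,t):=x-\int_0^t c(s;\omega,\mu)\,ds$, $\beta:=\tfrac{\tilde\mu}{\mu}-1>0$, and $P(\nu):=e^{\nu}+e^{-\nu}-2$. First I would record the two basic computations. Since $\mu\,c(t;\omega,\mu)=P(\mu)+a(\theta_t\omega)$, differentiation gives $\partial_t\phi_1-H\phi_1-a(\theta_t\omega)\phi_1=0$; that is, the leading tail $\phi_1$ solves the linearization at $0$ exactly. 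For $\phi_2$, using $H\phi_2=P(\tilde\mu)\phi_2$ and $\tilde\mu\,c=\tfrac{\tilde\mu}{\mu}(P(\mu)+a(\theta_t\omega))$, one obtains
\[
\partial_t\phi_2-H\phi_2-a(\theta_t\omega)\phi_2=\big(\beta A_\omega'(t)+G+\beta a(\theta_t\omega)\big)\phi_2=:F(t)\,\phi_2,
\]
where $G:=\tfrac{\tilde\mu}{\mu}P(\mu)-P(\tilde\mu)$. Because $P$ is convex with $P(0)=0$, the map $\nu\mapsto P(\nu)/\nu$ is increasing, so $\tfrac{\tilde\mu}{\mu}P(\mu)=\tilde\mu\tfrac{P(\mu)}{\mu}<\tilde\mu\tfrac{P(\tilde\mu)}{\tilde\mu}=P(\tilde\mu)$, i.e. $G<0$.

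Subtracting the two identities, the sub-solution inequality $\partial_t\tilde v\le H\tilde v+a(\theta_t\omega)\tilde v(1-\tilde v)$ is equivalent to $a(\theta_t\omega)\tilde v^2\le F(t)\phi_2$. A short computation shows that the admissible region $x\ge\int_0^t c\,ds+\tfrac{\ln d}{\tilde\mu-\mu}+\tfrac{A_\omega(t)}{\mu}$ is exactly $\{\xi\ge\xi_0(t)\}=\{\tilde v\ge0\}$, on which $0\le\tilde v\le\phi_1$, hence $\tilde v^2\le\phi_1^2$. I would then split $F(t)\phi_2=\beta a(\theta_t\omega)\phi_2+(\beta A_\omega'(t)+G)\phi_2$ and control each piece: the nonlinear term is absorbed by the first piece once $\phi_1^2\le\beta\phi_2$, while the sign of the second piece is governed by $A_\omega'$.

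The estimate $\phi_1^2\le\beta\phi_2$ on $\{\xi\ge\xi_0(t)\}$ is where $\tilde\mu<2\mu$ enters: $\phi_1^2/\phi_2=\tfrac1d e^{(\tilde\mu-2\mu)\xi-\beta A_\omega(t)}$ is nonincreasing in $\xi$ (as $\tilde\mu-2\mu<0$) and at $\xi=\xi_0(t)$ equals $d^{-\mu/(\tilde\mu-\mu)}e^{-A_\omega(t)}$, which tends to $0$ as $d\to\infty$ uniformly in $t$ because $A_\omega\in L^\infty(\mathbb{R})$; thus there is $d_\omega$ with $\phi_1^2\le\beta\phi_2$ for all $d\ge d_\omega$. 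It remains to choose $A_\omega$ so that $F>0$, and the decisive algebraic fact is $|G|/\beta<\underline a$. Indeed, the secant of $P$ through $\mu,\tilde\mu$ has $y$-intercept $-|G|/\beta$, while the tangent of $P$ at $\mu^*$ has $y$-intercept $-\underline a=P(\mu^*)-\mu^* P'(\mu^*)$; since $\mu,\tilde\mu<\mu^*$ and $P$ is strictly convex, this tangent lies strictly below $P$ at $\mu$ and at $\tilde\mu$, hence strictly below the secant line everywhere, in particular at $\nu=0$, giving $|G|/\beta<\underline a$ (this is exactly where $\tilde\mu<\mu^*$ is used). Fixing $\delta\in(0,\underline a-|G|/\beta)$, Lemma \ref{technical} furnishes $A_\omega\in W_{loc}^{1,\infty}(\mathbb{R})\cap L^\infty(\mathbb{R})$, piecewise $C^1$ with a sequence of breakpoints $\{t_k\}$, satisfying $A_\omega'(t)+a(\theta_t\omega)\ge\underline a-\delta$ a.e., whence $F(t)\ge\beta(\underline a-\delta)-|G|>0$ on each $(t_k,t_{k+1})$.

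The step I expect to be most delicate is verifying $a(\theta_t\omega)\tilde v^2\le F(t)\phi_2$ \emph{uniformly} in $t\in\mathbb{R}$, since $a(\theta_t\omega)$ need not be bounded. The splitting is designed for this: where $A_\omega'(t)=0$ one has $F(t)=\beta a(\theta_t\omega)-|G|$, which grows with $a(\theta_t\omega)$ and dominates $a(\theta_t\omega)\tilde v^2\le a(\theta_t\omega)\phi_1^2\le\beta a(\theta_t\omega)\phi_2$; where $A_\omega'$ is active the identity $A_\omega'+a(\theta_t\omega)=\underline a-\delta$ pins $F$ at the fixed positive level $\beta(\underline a-\delta)-|G|$, and the residual $a(\theta_t\omega)\tilde v^2\le a(\theta_t\omega)\,d^{-\mu/(\tilde\mu-\mu)}e^{-A_\omega(t)}\phi_2$ must be pushed below $F(t)\phi_2$ by enlarging $d_\omega$. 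The genuine subtlety is therefore to arrange the construction of $A_\omega$ from Lemma \ref{technical} so that $A_\omega'$ is active only where $a(\theta_t\omega)$ is moderate, so that a single large $d_\omega$ serves for all $t$; once this is secured, assembling the estimates on each interval $(t_k,t_{k+1})$ yields the stated differential inequality in the required region and completes the proof.
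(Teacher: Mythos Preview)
Your decomposition $\tilde v=\phi_1-\phi_2$, the reduction to $a(\theta_t\omega)\tilde v^2\le F(t)\phi_2$, and the verification that $|G|/\beta<\underline a$ are all correct and match the paper's argument. The gap is exactly the one you flag in your last paragraph and do not close: with $A_\omega$ chosen so that $A_\omega'+a\ge\underline a-\delta$, the lower bound $F(t)\ge\beta(\underline a-\delta)-|G|$ is a fixed constant, while the left side $a(\theta_t\omega)\tilde v^2$ carries the possibly unbounded factor $a(\theta_t\omega)$. Your proposed workaround (``arrange $A_\omega'$ to be active only where $a$ is moderate'') is not supplied by Lemma~\ref{technical} as stated and would require additional structural information about $a^\omega$ that you do not have.

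The paper resolves this with a single clean device: choose $\delta>0$ so small that $(1-\delta)\underline a>|G|/\beta$, and apply Lemma~\ref{technical} to $(1-\delta)a^\omega$ rather than to $a^\omega$, obtaining $A_\omega$ with
\[
(1-\delta)a(\theta_t\omega)+A_\omega'(t)\ \ge\ \frac{|G|}{\beta}\qquad\text{on each }(t_k,t_{k+1}).
\]
In your notation this gives $F(t)=\beta(A_\omega'+a)-|G|\ge\beta\delta\,a(\theta_t\omega)$, a lower bound that \emph{scales with $a$}. The target inequality then reduces to the $a$-free condition $\tilde v^2\le\delta\beta\,\phi_2$, which (since $0\le\tilde v\le\phi_1$ on the admissible set and $\phi_1^2/\phi_2\le d^{-\mu/(\tilde\mu-\mu)}e^{\|A_\omega\|_\infty}$ there, exactly as you computed) holds for all $d\ge d_\omega$ with $d_\omega$ depending only on $\delta$, $\beta$, and $\|A_\omega\|_\infty$. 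This replaces your unverified structural hypothesis on $A_\omega$ by a one-line modification of how Lemma~\ref{technical} is invoked; with it, your argument goes through verbatim.
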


 \begin{proof}
For given $\omega\in\Omega_0$ and $ 0<\mu <\tilde{\mu}<\min \{  2\mu, {\mu }^* \} $, by the arguments in the proof of \cite[Lemma 5.1]{CaSh17} we can get that $\frac{e^{\tilde{\mu}}+e^{-\tilde{\mu}}-2+\underline{a}}{\tilde{\mu}}<\frac{e^{\mu}+e^{-\mu}-2+\underline{a}}{\mu} $, and hence $ \underline{a} >\frac{\mu ( e^{\tilde{\mu}}+e^{-\tilde{\mu}}-2 ) -\tilde{\mu}( e^{\mu}+e^{-\mu}-2 )}{\tilde{\mu}-\mu} $. Let $ 0<\delta \ll 1 $ be such that $ ( 1-\delta )\underline{ a} >\frac{\mu ( e^{\tilde{\mu}}+e^{-\tilde{\mu}}-2 ) -\tilde{\mu}( e^{\mu}+e^{-\mu}-2 )}{\tilde{\mu}-\mu} $.
It then follows from  Lemma \ref{technical} that there exist $ T>0 $ and $ A_{\omega}\in W_{loc}^{1,\infty}( \mathbb{R} ) \cap L^{\infty}( \mathbb{R} )$ such that $ A_{\omega}( \cdot ) \in C^1(( t_k,t_{k+1}))  $ with $ t_k=kT $ for $ k\in \mathbb{Z} $, and \begin{equation}\label{inequality-A(t)}
( 1-\delta ) a( \theta _t\omega ) +A_{\omega}^{\prime}( t)\ge  \frac{\mu ( e^{\tilde{\mu}}+e^{-\tilde{\mu}}-2 ) -\tilde{\mu}( e^{\mu}+e^{-\mu}-2 )}{\tilde{\mu}-\mu}
\end{equation}
for all $ t\in ( t_k,t_{k+1})  $, $ k\in \mathbb{Z} $.

Now fix $\delta>0$ and $A_{\omega}( t )$ chosen in the above inequality. Let $ \xi ( x,t;\omega ) =x-\int_0^t{c( s ;\omega ,\mu) ds} $, and $ \tilde{v}^{\mu ,d,A_{\omega}}( x,t, \omega) :=e^{-\mu   \xi ( x,t;\omega ) }-de^{( \frac{\tilde{\mu}}{\mu}-1 ) A_{\omega}( t ) -\tilde{\mu}   \xi ( x,t;\omega  )} $ with $ d>1 $ to be determined later.
Note that $c(t;\omega, \mu)=\frac{e^{\mu}+e^{-\mu}-2+a(\theta_t\omega)}{\mu}$.
Then we have
\begin{align}
\label{more}
\partial_t& \tilde{v}^{\mu ,d,A_{\omega}}-( H\tilde{v}^{\mu ,d,A_{\omega}}+a( \theta _t\omega ) \tilde{v}^{\mu ,d,A_{\omega}}( 1-\tilde{v}^{\mu ,d,A_{\omega}} ) )\nonumber\\
=&\mu c( t;\omega,\mu ) e^{-\mu \xi ( x,t;\omega )}+d( -( \frac{\tilde{\mu}}{\mu}-1 ) A_{\omega}^{\prime}( t ) -\tilde{\mu}c( t;\omega,\mu ) ) e^{( \frac{\tilde{\mu}}{\mu}-1 ) A_{\omega}( t ) -\tilde{\mu}\xi ( x,t;\omega )}\nonumber\\
&-(( e^{\mu}+e^{-\mu}-2 ) e^{-\mu \xi ( x,t;\omega )}-d( e^{\tilde{\mu}}+e^{-\tilde{\mu}}-2 ) e^{( \frac{\tilde{\mu}}{\mu}-1 ) A_{\omega}( t ) -\tilde{\mu}\xi ( x,t;\omega)})  \nonumber\\
 &-a( \theta _t\omega ) ( e^{-\mu \xi ( x,t;\omega )}-de^{( \frac{\tilde{\mu}}{\mu}-1 ) A_{\omega}( t ) -\tilde{\mu}\xi ( x,t;\omega )} ) ( 1-( e^{-\mu \xi ( x,t;\omega )}-de^{( \frac{\tilde{\mu}}{\mu}-1 ) A_{\omega}( t ) -\tilde{\mu}\xi ( x,t;\omega )} ) )\nonumber\\
 =& d( -( \frac{\tilde{\mu}}{\mu}-1 ) A_{\omega}^{\prime}( t ) -\tilde{\mu}c( t;\omega,\mu ) + e^{\tilde{\mu}}+e^{-\tilde{\mu}}-2  +a( \theta _t\omega ) ) e^{( \frac{\tilde{\mu}}{\mu}-1 ) A_{\omega}( t ) -\tilde{\mu}\xi ( x,t;\omega )}\nonumber\\
 & +a( \theta _t\omega ) ( e^{-\mu \xi ( x,t;\omega )}-de^{( \frac{\tilde{\mu}}{\mu}-1 ) A_{\omega}( t ) -\tilde{\mu}\xi ( x,t;\omega )} ) ^2 \nonumber\\
 =&d( \frac{\tilde{\mu}}{\mu}-1 ) ( -A_{\omega}^{\prime}( t ) +\frac{\mu ( e^{\tilde{\mu}}+e^{-\tilde{\mu}}-2 ) -\tilde{\mu}( e^{\mu}+e^{-\mu}-2 )}{\tilde{\mu}-\mu}-a( \theta _t\omega ) ) e^{( \frac{\tilde{\mu}}{\mu}-1 ) A_{\omega}( t ) -\tilde{\mu}\xi ( x,t;\omega )} \nonumber\\
 &+a( \theta _t\omega ) e^{-2\mu \xi ( x,t;\omega )}-d( 2e^{-\mu \xi ( x,t;\omega )}-de^{( \frac{\tilde{\mu}}{\mu}-1 ) A_{\omega}( t) -\tilde{\mu}\xi ( x,t;\omega )} ) e^{( \frac{\tilde{\mu}}{\mu}-1 ) A_{\omega}( t ) -\tilde{\mu}\xi ( x,t;\omega )}a( \theta _t\omega )\nonumber\\
=&d( \frac{\tilde{\mu}}{\mu}-1 ) ( \frac{\mu ( e^{\tilde{\mu}}+e^{-\tilde{\mu}}-2 ) -\tilde{\mu}( e^{\mu}+e^{-\mu}-2 )}{\tilde{\mu}-\mu}-( 1-\delta ) a( \theta _t\omega ) -A_{\omega}^{\prime}( t ) )e^{( \frac{\tilde{\mu}}{\mu}-1 ) A_{\omega}( t ) -\tilde{\mu}\xi ( x,t;\omega )} \nonumber\\
&  +( e^{-( 2\mu -\tilde{\mu} ) \xi ( x,t;\omega )}-d\delta ( \frac{\tilde{\mu}}{\mu}-1 ) e^{( \frac{\tilde{\mu}}{\mu}-1 ) A_{\omega}( t )} ) a( \theta _t\omega ) e^{-\tilde{\mu}\xi ( x,t;\omega )} \nonumber\\
&+d( -2e^{-\mu \xi ( x,t;\omega )}+de^{( \frac{\tilde{\mu}}{\mu}-1 ) A_{\omega}( t ) -\tilde{\mu}\xi ( x,t;\omega )} ) e^{( \frac{\tilde{\mu}}{\mu}-1 ) A_{\omega}( t ) -\tilde{\mu}\xi ( x,t;\omega )}a( \theta _t\omega )
\end{align}
for $ t\in ( t_k,t_{k+1} )  $.

Let $d_\omega\ge \max \left\{ \frac{e^{-( \frac{\tilde{\mu}}{\mu}-1 ) \lVert A_{\omega} \rVert _{\infty}}}{\delta ( \frac{\tilde{\mu}}{\mu}-1 )},e^{( \frac{\tilde{\mu}}{\mu}-1 ) \lVert A_{\omega} \rVert _{\infty}} \right\}$. Then we have
$ d\delta ( \frac{\tilde{\mu}}{\mu}-1 ) e^{( \frac{\tilde{\mu}}{\mu}-1 ) A_{\omega}( t )}\ge 1$, $\forall d\ge d_\omega $. Note that if $ x\ge \int_0^t{c( s ;\omega ,\mu ) ds}+\frac{\ln d}{\tilde{\mu}-\mu}+\frac{A_{\omega}( t )}{\mu} $, then $\xi ( x,t;\omega ) =x-\int_0^t{c(s ;\omega ,\mu ) ds}\ge 0$ and $ \tilde{v}^{\mu ,d,A_{\omega}}( x,t ) \ge 0 $. Together with \eqref{inequality-A(t)}, we get that every term on the right hand side of  (\ref{more}) is less than or equal to zero.
\end{proof}

\medskip

\section{Random transition fronts}

In this section, we study the existence and stability of random transition fronts, and prove Theorem \ref{exist-thm} and \ref{stable-thm}.

\subsection{Existence of random transition fronts}

For any $\gamma>c_0$, let $0<\mu<\mu^*$ be such that $\frac{e^{\mu }+e^{-\mu}-2+\underline{a}}{\mu}=\gamma $. Then for every $\omega\in\Omega$, let  $c(t;\omega, \mu)=\frac{e^{\mu}+e^{-\mu}-2+a(\theta_t\omega)}{\mu}$ and  $ \hat{v}^{\mu}( x,t;\omega ) =e^{-\mu ( x-\int_0^t{c( s ;\omega ,\mu ) ds} )}$.
Then $ \hat{v}^{\mu}( x,t;\omega)$ satisfies
\begin{align*}
\partial &_t\hat{v}^{\mu}( x,t;\omega ) -H\hat{v}^{\mu}( x,t;\omega ) -a( \theta _t\omega ) \hat{v}^{\mu}( x,t;\omega )\\
=&\hat{v}^{\mu}( x,t;\omega ) [ \mu c( t;\omega ,\mu ) -( e^{\mu}+e^{-\mu}-2+a( \theta _t\omega ) ) ]=0,\ \ \text{for\ }x\in \mathbb{R},\ t\in \mathbb{R}.
\end{align*}
Then we have that
\begin{align*}
\partial _t\hat{v}^{\mu}( x,t;\omega )
&=H\hat{v}^{\mu}( x,t;\omega ) +a( \theta _t\omega ) \hat{v}^{\mu}( x,t;\omega )\\
&\ge H\hat{v}^{\mu}( x,t;\omega ) +a( \theta _t\omega ) \hat{v}^{\mu}( x,t;\omega ) ( 1-\hat{v}^{\mu}( x,t;\omega ) ) ,\ \ \text{for\ }x\in \mathbb{R},\ t\in \mathbb{R}.
\end{align*}
Hence, $ \hat{v}^{\mu}( x,t;\omega ) =e^{-\mu ( x-\int_0^t{c( s ;\omega ,\mu ) ds} )}$ is a super-solution of (\ref{main-eqn2}).
Denote
$$ \overline{v}^{\mu}( x,t;\omega ) =\min \{ 1,\hat{v}^{\mu}( x,t;\omega )\}. $$

\begin{lemma}\label{monotone-sup}
For $\omega\in\Omega_0$, we have that
$$ u( x,t-t_0;\overline{v}^{\mu}( \cdot ,t_0;\omega) ,\theta_{t_0}\omega) \le \overline{v}^{\mu}( x,t;\omega ) ,\ \ \forall x\in \mathbb{R},\ t\geq t_0,\  t_0\in\R. $$
\end{lemma}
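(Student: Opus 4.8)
The plan is to exhibit $\overline{v}^{\mu}(\cdot,t_0+\cdot\,;\omega)$ as a \emph{bounded} super-solution of \eqref{main-eqn2} for the shifted sample point $\theta_{t_0}\omega$, and then to conclude by a single application of the comparison principle, Proposition \ref{comparison}(1). First I would set $g(x,\tau):=\hat{v}^{\mu}(x,t_0+\tau;\omega)$ and check that it inherits the super-solution computation carried out just before the lemma, now for the equation with $\omega$ replaced by $\theta_{t_0}\omega$. Using the cocycle identity $a(\theta_{t_0+\tau}\omega)=a(\theta_{\tau}(\theta_{t_0}\omega))$, together with $Hg(x,\tau)=(e^{\mu}+e^{-\mu}-2)g(x,\tau)$ and $\partial_\tau g=\mu c(t_0+\tau;\omega,\mu)g=(e^{\mu}+e^{-\mu}-2+a(\theta_{t_0+\tau}\omega))g$, one gets $\partial_\tau g=Hg+a(\theta_{t_0+\tau}\omega)g\ge Hg+a(\theta_{t_0+\tau}\omega)g(1-g)$, since $a>0$ and $g>0$. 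Thus $g$ is a super-solution of \eqref{main-eqn2} with sample point $\theta_{t_0}\omega$.

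The delicate point, which is where the main obstacle lies, is that one cannot compare the solution directly with $g$: for fixed $\tau$, $g(\cdot,\tau)$ is unbounded as $x\to-\infty$, whereas Proposition \ref{comparison}(1) requires a bounded super-solution. This is precisely why the statement is phrased through the truncation, and the key step is to verify that $w(x,\tau):=\overline{v}^{\mu}(x,t_0+\tau;\omega)=\min\{1,g(x,\tau)\}$ is itself a super-solution, now bounded since $0<w\le 1$. For fixed $x$, $w(x,\cdot)$ is the minimum of a $C^1$ function and a constant, hence locally Lipschitz and absolutely continuous, so it suffices to verify the differential inequality at each $\tau$ of differentiability. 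The structural fact that makes this work is that $H$ is cooperative: since $w\le g$ and $w\le 1$ pointwise, at any $x$ with $w(x,\tau)=g(x,\tau)$ we have $Hw(x,\tau)\le Hg(x,\tau)$, and at any $x$ with $w(x,\tau)=1$ we have $Hw(x,\tau)\le 0$. Where $g(x,\tau)<1$ we have $w=g$ locally, the reaction terms agree, and $Hw+a w(1-w)\le Hg+a g(1-g)\le \partial_\tau g=\partial_\tau w$; where $g(x,\tau)>1$ we have $w\equiv 1$ locally, so $\partial_\tau w=0\ge Hw+a\cdot 1\cdot(1-1)=Hw$. The crossing times $g(x,\tau)=1$ are negligible because, for each $x$, $g(x,\cdot)$ is strictly increasing (as $c(\cdot;\omega,\mu)>0$), so they form a single point; hence the inequality holds a.e.\ and $w$ is a super-solution.

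Finally I would apply Proposition \ref{comparison}(1) on $\tau\ge 0$, for the equation with sample point $\theta_{t_0}\omega\in\Omega_0$ (here $\theta_{t_0}\Omega_0=\Omega_0$), to the genuine solution $u_1(x,\tau)=u(x,\tau;\overline{v}^{\mu}(\cdot,t_0;\omega),\theta_{t_0}\omega)$, which is bounded and hence a sub-solution, and the bounded super-solution $u_2(x,\tau)=w(x,\tau)$. Their initial data coincide, $u_1(\cdot,0)=\overline{v}^{\mu}(\cdot,t_0;\omega)=w(\cdot,0)=u_2(\cdot,0)$, so the comparison principle gives $u_1(x,\tau)\le u_2(x,\tau)$ for all $x\in\mathbb{R}$ and $\tau\ge 0$. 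Setting $\tau=t-t_0$ yields $u(x,t-t_0;\overline{v}^{\mu}(\cdot,t_0;\omega),\theta_{t_0}\omega)\le \overline{v}^{\mu}(x,t;\omega)$, which is the claim.
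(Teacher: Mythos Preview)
Your proof is correct and follows essentially the same approach as the paper: show that the truncation $\overline{v}^{\mu}(\cdot,t_0+\cdot;\omega)$ is a bounded super-solution of \eqref{main-eqn2} for the shifted sample point $\theta_{t_0}\omega$, and then apply comparison with the genuine solution. Your version is in fact more explicit on the one step the paper glosses over: the paper asserts without detail that $\overline{u}=e^{Ct}\overline{v}^{\mu}$ inherits the integral super-solution inequality from $\hat{u}=e^{Ct}\hat{v}^{\mu}$, whereas you carefully justify why $\min\{1,\hat{v}^{\mu}\}$ remains a super-solution by exploiting the cooperativity of $H$ (so that $Hw\le Hg$ where $w=g$, and $Hw\le 0$ where $w=1$) and noting that the crossing set is a single point for each fixed $x$. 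The only cosmetic difference is that the paper re-runs the $e^{Ct}$-weighted integral argument from the proof of Proposition~\ref{comparison} by hand, while you verify the pointwise differential inequality for $w$ and then invoke Proposition~\ref{comparison}(1) as a black box; both arrive at the same conclusion.
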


\begin{proof}
For any constant $C$, $ \hat{u}( x,t;\omega) :=e^{Ct}\hat{v}^{\mu}( x,t;\omega)$ satisfies
\begin{align*}
\partial _t\hat{u}( x,t;\omega) &=( \partial _t\hat{v}^\mu( x,t;\omega) +C\hat{v}^{\mu}( x,t;\omega)) e^{Ct}\\
&\ge H\hat{u}( x,t;\omega) +C\hat{u}( x,t;\omega) +a( \theta _t\omega) \hat{u}( x,t;\omega)( 1-\hat{v}^{\mu}( x,t;\omega )),
\end{align*}
hence,
$$
\hat{u}( x,t;\omega) \ge \hat{u}( x,t_0;\omega) +\int_{t_0}^t{( H\hat{u}( x,\tau ;\omega) +C\hat{u}( x,\tau ;\omega) +a( \theta _{\tau}\omega) \hat{u}( x,\tau ;\omega) ( 1-\hat{v}^{\mu}( x,\tau ;\omega))) d\tau}.
$$
Denote $ \overline{u}( x,t;\omega) :=e^{Ct}\overline{v}^{\mu}( x,t;\omega)$. Then we also have
$$
\overline{u}( x,t;\omega) \ge \overline{u}( x,t_0;\omega) +\int_{t_0}^t{( H\overline{u}( x,\tau ;\omega ) +C\overline{u}( x,\tau ;\omega) +a( \theta _{\tau}\omega) \overline{u}( x,\tau ;\omega )( 1-\overline{v}^{\mu}( x,\tau ;\omega))) d\tau}
$$

Let  $ Q( x,t;\omega) =e^{Ct}( \overline{v}^{\mu}( x,t;\omega) -u( x,t-t_0;\overline{v}^{\mu}( \cdot ,t_0;\omega),\theta_{t_0}\omega))$. Then
$$ Q( x,t;\omega) \ge Q( x,t_0;\omega) +\int_{t_0}^t{( HQ( x,\tau ;\omega) +(C+b( x,\tau ;\omega)) Q( x,\tau ;\omega)) d\tau}, $$
where
$$ b( x,t ;\omega) =a( \theta _{t}\omega)( 1-\overline{v}^{\mu}( x,t ;\omega) -u( x,t-t_0 ;\overline{v}^{\mu}( \cdot ,t_0;\omega),\theta_{t_0}\omega)).$$
Choose $C>0$ such that $ b( x,t;\omega) -2+C>0 $ for all $t\ge t_0$, $x\in\mathbb{R}$ and a.e. $\omega\in\Omega$. By the arguments of Proposition \ref{comparison}, we have that
$$ Q( x,t;\omega) \ge Q( x,t_0;\omega) =0, $$
and hence for $\omega\in\Omega_0$, we have that
$$ u( x,t-t_0;\overline{v}^{\mu}( \cdot ,t_0;\omega),\theta_{t_0}\omega) \le \overline{v}^{\mu}( x,t;\omega) ,\,\,\,\,\forall x\in \mathbb{R},\,\,t\geq t_0,\ t_0\in\R. $$
\end{proof}

Next, we construct a sub-solution of (\ref{main-eqn2}).  Let $\tilde{\mu}>0$ be such that $\mu<\tilde{\mu}<\min\{2\mu,{\mu}^*\}$ and $\omega\in\Omega_0$. Let $A_\omega$ and $d_\omega$ be given by Lemma \ref{sub-solution-lemma}, and let
$$ x_{\omega}( t ) =\int_0^t{c( s;\omega ,\mu )}ds+\frac{{\ln}d_{\omega}+{\ln}\tilde{\mu}-{\ln}\mu}{\tilde{\mu}-\mu}+\frac{A_{\omega}( t )}{\mu}. $$
Recall that $$ \tilde{v}^{\mu ,d,A_{\omega}}( x,t, \omega) =e^{-\mu ( x-\int_0^t{c( s ;\omega ,\mu ) ds} )}-de^{( \frac{\tilde{\mu}}{\mu}-1 ) A_{\omega}( t ) -\tilde{\mu}( x-\int_0^t{c( s ;\omega ,\mu ) ds} )} $$
By calculation we have that for any given $t\in\mathbb{R}$,
\begin{equation}
\label{supremum}
 \tilde{v}^{\mu ,d_{\omega},A_{\omega}}( x_{\omega}( t ) ,t,\omega ) =\sup\limits_{x\in \mathbb{R}}\tilde{v}^{\mu ,d_{\omega},A_{\omega}}( x,t,\omega ) =e^{-\mu ( \frac{\ln {d_{\omega}}}{\tilde{\mu}-\mu}+\frac{A_{\omega}( t )}{\mu} )}e^{ -\mu \frac{ \ln{\tilde{\mu}}-  \ln{\mu}}{\tilde{\mu}-\mu}}( 1-\frac{\mu}{\tilde{\mu}} ) .
 \end{equation}
Define
$$ \underline{v}^{\mu}( x,t;\theta _{t_0}\omega ) =\left\{ \begin{array}{l} 	\tilde{v}^{\mu ,d_{\omega},A_{\omega}}( x,t+t_0,\omega ) ,\ \ \text{if\ }x\ge x_{\omega}( t+t_0 ) ,\\ 	\tilde{v}^{\mu ,d_{\omega},A_{\omega}}( x_{\omega}( t+t_0 ), t+t_0, \omega ) ,\ \ \text{if\ }x\le x_{\omega}( t+t_0 ) .\\ \end{array} \right.  $$
It is clear that
$$ 0<\underline{v}^{\mu}( x,t;\theta _{t_0}\omega ) <\overline{v}^{\mu}( x,t;\theta _{t_0}\omega ) \le 1,\ \ \forall t\in \mathbb{R},\ x\in \mathbb{R},\ t_0\in \mathbb{R}. $$
and
 \begin{align}
\label{super1}
\lim\limits_{x\rightarrow \infty} \sup\limits_{t\in\R, t_0\in \mathbb{R}} \frac{\underline{v}^{\mu}( x,t;\theta _{t_0}\omega )}{\overline{v}^{\mu}( x,t;\theta _{t_0}\omega )}=1.
\end{align}
Note that by the similar arguments as in Lemma \ref{monotone-sup}, we can prove that
$$ u( x,t-t_0;\underline{v}^{\mu}( \cdot ,t_0;\theta _{t_0}\omega ) ,\omega ) \ge \underline{v}^{\mu}( x,t;\omega )  $$
for $x\in\mathbb{R}$, $t\ge t_0$ and a.e. $\omega\in\Omega$.


Next, we prove Theorem \ref{exist-thm}.

\begin{proof}[Proof of Theorem \ref{exist-thm}]
By Lemma \ref{monotone-sup} we have that
$$ u( x,t-t_0;\overline{v}^{\mu}( \cdot ,t_0;\omega ) ,\theta _{t_0}\omega ) \le \overline{v}^{\mu}( x,t;\omega ) ,\ \ \forall x\in \mathbb{R},\ t\ge t_0,\ t_0\in \mathbb{R}. $$
It then follows that
$$ u( x,\tau_2-\tau_1;\overline{v}^{\mu}( \cdot ,-\tau_2;\omega) ,\theta _{-\tau_2}\omega) \le \overline{v}^{\mu}( x,-\tau_1;\omega) ,\ \ \forall x\in \mathbb{R},\ \ \tau_2>\tau_1.$$
Then we get that
$$u( x,t+\tau_1;u( \cdot,\tau_2-\tau_1;\overline{v}^{\mu}( \cdot ,-\tau_2;\omega) ,\theta _{-\tau_2}\omega),\theta _{-\tau_1}\omega)
\le u( x,t+\tau_1;\overline{v}^{\mu}( \cdot ,-\tau_1;\omega),\theta _{-\tau_1}\omega) $$
for $x\in \mathbb{R}$, $t\ge -\tau_1$, $\tau_2>\tau_1$,
and hence
$$u( x,t+\tau_2 ;\overline{v}^{\mu}( \cdot ,-\tau_2 ;\omega) ,\theta _{-\tau_2}\omega)\leq u( x,t+\tau_1 ;\overline{v}^{\mu}( \cdot ,-\tau_1 ;\omega) ,\theta _{-\tau_1}\omega) ,\,\forall x\in \mathbb{R}, t\ge -\tau_1,  \tau_2>\tau_1.$$
Therefore $\lim\limits_{\tau\rightarrow\infty} u( x,t+\tau ;\overline{v}^{\mu}( \cdot ,-\tau ;\omega) ,\theta _{-\tau}\omega)$ exists. Define
\begin{equation}
\label{sationary1}
 V(x,t;\omega)=\underset{\tau\rightarrow\infty}{\lim}u( x,t+\tau ;\overline{v}^{\mu}( \cdot ,-\tau ;\omega) ,\theta _{-\tau}\omega)
 \end{equation}
for $x\in \mathbb{R}$, $t\in\R$, $\omega \in \Omega _0$. Then $ V(x,t;\omega)$ is non-increasing in $x\in\R$ and by dominated convergence theorem we know that $ V(x,t;\omega)$ is a solution of \eqref{main-eqn2}.

We claim that, for every $\omega\in\Omega_0$,
\begin{align}
\label{uniformly}
\underset{x\rightarrow -\infty}{\lim}V( x+\int_0^t{c(s;\omega ,\mu ) ds},t;\omega ) =1\ \ \text{uniformly\ in}\ t\in \mathbb{R}.
\end{align}
In fact, fix any $\omega\in\Omega_0$, let
$ \hat{x}_\omega=\frac{\ln d_{\omega}+\ln{\tilde{\mu}}-\ln\mu}{\tilde{\mu}-\mu}-\frac{\lVert A_{\omega} \rVert _{\infty}}{\mu},$
it follows from $ \underline{v}^{\mu}( x,t;\omega) \le V( x, t; \omega) $ and \eqref{supremum} that
$$ 0<( 1-\frac{\mu}{\tilde{\mu}}) e^{-\mu(\frac{\ln d_{\omega}+\ln{\tilde{\mu}}-\ln\mu}{\tilde{\mu}-\mu}+\frac{\lVert A_{\omega} \rVert _{\infty}}{\mu})}\le \inf\limits_{t\in \mathbb{R}}V(\hat{x}_\omega+\int_0^t{c( s ;\omega ,\mu) ds, t; \omega} ).  $$
Let $ u_0( x) \equiv u_0:=\inf\limits_{t\in \mathbb{R}}V(\hat{x}_\omega+\int_0^t{c(s ;\omega ,\mu) ds, t; \omega})$, and $\tilde{u}_0(x)$ be uniformly continuous such that $\tilde{u}_0(x)=u_0(x)$ for $x<\hat{x}_\omega-1$ and $\tilde{u}_0(x)=0$ for $x\geq \hat{x}_\omega$.
Then $\underset{n\rightarrow \infty}{\lim}\tilde{u}_0( x-n) =u_0( x)$ locally uniformly in $x\in \mathbb{R}.$
Note that by the proof of Proposition \ref{stable-lemma}, we have
$$ \underset{t\rightarrow \infty}{\lim}u( x,t;u_0,\theta _{t_0}\omega) =1$$
uniformly in $t_0\in \mathbb{R}$ and $x\in \mathbb{R}$.
Then for any $\epsilon>0$, there is $T:=T(\epsilon)>0$ such that $$ 1>u( x,T;u_0,\theta _{t_0}\omega) >1-\epsilon, \ \ \forall t_0\in \mathbb{R},\ x\in \mathbb{R}. $$
Therefore, by (H) and the definition of $c(t,\omega,\mu)$ we derive, $$ 1>u( x+\int_0^T{c( s ;\theta_{t_0}\omega ,\mu) ds},T;u_0,\theta _{t_0}\omega) >1-\epsilon, \,\,\,\,\forall t_0\in \mathbb{R},\,\,x\in \mathbb{R}. $$
By Proposition \ref{convergence-lemma}, there is $N:=N(\epsilon)>1$ such that
$$ 1>u( \int_0^T{c( s ;\theta_{t_0}\omega ,\mu ) ds },T;\tilde{u}_0(\cdot-N),\theta _{t_0}\omega) >1-2\varepsilon, \,\,\,\,\forall t_0\in \mathbb{R}. $$
That is,
$$ 1>u( \int_0^T{c( s ;\theta_{t_0}\omega ,\mu ) ds}-N,T;\tilde{u}_0(\cdot),\theta _{t_0}\omega) >1-2\varepsilon, \,\,\,\,\forall t_0\in \mathbb{R}. $$
Note that
$$ V( x+\int_0^{t-T}{c( s;\omega ,\mu) ds},t-T;\omega ) \ge \tilde{u}_0( x), \ \ \forall t\in \mathbb{R},\ x\in \mathbb{R}. $$
and
$$ \int_0^t{c( s ;\omega ,\mu) ds}=\int_0^T{c(s ;\theta _{t-T}\omega ,\mu) ds}+\int_0^{t-T}{c( s ;\omega ,\mu ) d s}. $$
Then we get
\begin{align*}
\label{uniformly1}
 1&>V( x+\int_0^t{c( s ;\omega ,\mu ) ds},t;\omega)
 \\&=u( x+\int_0^T{c( s ;\theta_{t-T}\omega ,\mu ) ds},T; V( \cdot+\int_0^{t-T}{c( s;\omega ,\mu ) ds},t-T;\omega), \theta_{t-T}\omega ) \\&>1-2\epsilon, \ \ \ \ \ \ \forall t\in \mathbb{R},\ x\le -N.
\end{align*}
Thus \eqref{uniformly} follows.

Note that by  \eqref{super1} we have that, for every $\omega\in\Omega_0$,
$$ \lim\limits_{x\rightarrow \infty}\sup\limits_{t\in \mathbb{R}} \frac{V( x+\int_0^t{c( s ;\omega ,\mu ) ds},t;\omega ) }{e^{-\mu x}}=1.$$

Set
$$\tilde{ \varPhi} (x, t;\omega) =V( x+\int_0^t{c( s;\omega ,\mu ) ds,t;\omega} ) \,\,\,\text{ and }\,\,\,\varPhi(x,\omega)=\tilde{ \varPhi} (x, 0;\omega) . $$
We claim that $\tilde{ \varPhi} (x, t;\omega)$ is stationary
ergodic in $t$, that is,  for a.e. $\omega\in\Omega$,
\begin{equation*}
\label{stationary ergodic}
\tilde{ \varPhi} ( x,t;\omega ) =\tilde{\varPhi}( x,0;\theta _t\omega ) .
\end{equation*}
In fact, note that for $\omega\in\Omega$,
\begin{align}
\label{speed1}
\int_{-\tau}^t{c( s;\omega ,\mu )}ds
&=\int_{-\tau}^t{\frac{e^{\mu}+e^{-\mu}-2+a( \theta _s\omega )}{\mu}ds} \nonumber \\
&=\frac{e^{\mu}+e^{-\mu}-2}{\mu}( t+\tau ) +\int_{-\tau}^t{\frac{a( \theta _s\omega )}{\mu}ds}
\end{align}
and
\begin{align}
\label{speed2}
\int_{-( t+\tau )}^0{c( s;\theta _t\omega ,\mu ) ds}
&=\int_{-( t+\tau )}^0{\frac{e^{\mu}+e^{-\mu}-2+a( \theta _s\circ\theta _t\omega )}{\mu}ds} \nonumber \\
&=\frac{e^{\mu}+e^{-\mu}-2}{\mu}( t+\tau ) +\int_{-( t+\tau )}^0{\frac{a( \theta _{s+t}\omega )}{\mu}ds} \nonumber \\
&=\frac{e^{\mu}+e^{-\mu}-2}{\mu}( t+\tau ) +\int_{-\tau}^t{\frac{a( \theta _s\omega )}{\mu}ds}.
\end{align}
Combining \eqref{speed1} with \eqref{speed2}, we derive $ \int_{-\tau}^t{c( s;\omega ,\mu ) ds}=\int_{-( t+\tau )}^0{c( s;\theta _t\omega ,\mu ) ds}$ for  $\tau \ge 0$ and $ t\in \mathbb{R} $.
Recall that $$ \overline{v}^{\mu}( x,t;\omega ) =\min \left\{ 1,e^{-\mu ( x-\int_0^t{c( s ;\omega ,\mu ) ds})} \right\}. $$
Then we have
\begin{align*}
\tilde{ \varPhi} ( x,t;\omega )=
&\lim\limits_{\tau \rightarrow \infty}u( x+\int_0^t{c( s;\omega ,\mu ) ds},t+\tau ;\overline{v}^{\mu}( \cdot ,-\tau ;\omega ) ,\theta _{-\tau}\omega )\\
=&\lim\limits_{\tau \rightarrow \infty}u( x,t+\tau ;\overline{v}^{\mu}( \cdot+\int_0^t{c( s;\omega ,\mu ) ds} ,-\tau ;\omega ) ,\theta _{-\tau}\omega )\\
=&\lim\limits_{\tau \rightarrow \infty}u( x,t+\tau ;\overline{v}^{\mu}( \cdot ,-( t+\tau ) ;\theta _t\omega ) ,\theta _{-\tau}\omega )\\
=&\lim\limits_{\tau \rightarrow \infty}u( x,t+\tau ;\overline{v}^{\mu}( \cdot ,-( t+\tau ) ;\theta _t\omega ) ,\theta _{t-(t+\tau)}\omega )\\
=&\lim\limits_{\tau \rightarrow \infty}u( x,\tau ;\overline{v}^{\mu}( \cdot ,-\tau ;\theta _t\omega ) ,\theta _{t-\tau}\omega )\\
=&\tilde{\varPhi} ( x,0;\theta _t\omega ).
\end{align*}
The claim  thus follows and we get the desired random profile $\varPhi(x,\omega)$.

\end{proof}

\subsection{Stability of random transition fronts}
\begin{proof}[Proof of Theorem \ref{stable-thm}]

For any $\omega \in \Omega _0 $ and given $\mu\in(0,\mu^*)$,
$u(t;\omega)=\{u_i(t;\omega)\}_{i\in\mathbb{Z}}$ with $u_i( t;w ) ={\varPhi} ( i-\int_0^t{c( s;\omega,\mu ) d s },\theta _t\omega )$ is a random transition front of \eqref{main-eqn}. Let $ u^0\in l^{\infty}( \mathbb{Z} )$, $\ u^0=\{ u_{i}^{0}\} _{i\in \mathbb{Z}}$ satisfy
\begin{equation*}
\underset{i\le i_0}{\inf}u_{i}^{0}>0\ \ \ \forall i_0\in \mathbb{Z},\ \ \underset{i\rightarrow \infty}{\lim}\frac{u_{i}^{0}}{u_i(0;\omega)}=1.
\end{equation*}
Then there is $ \alpha \ge 1$ such that
$$ \frac{1}{\alpha}\le \frac{u_{i}^{0}}{u_i(0;\omega)}\le \alpha ,\ \ \forall i\in \mathbb{Z}. $$
By comparison principle we get that
\begin{align*}
 u_i( t;u^0,\omega ) \le u_i( t;\alpha u_\cdot(0;\omega) ,\omega ) ,\ \ \forall i\in \mathbb{Z},\  t\ge 0
\end{align*}
and
\begin{align}
\label{stable2}
 u_i( t, \omega ) \le u_i( t;\alpha u^0,\omega ), \ \ \forall i\in \mathbb{Z},\  t\ge 0.
\end{align}
Also, we have
$$ \frac{d}{dt}( \alpha u_i(t;u^0,\omega) ) \ge H( \alpha u_i(t;u^0,\omega) ) +a( \theta _t\omega ) \alpha u_i(t;u^0,\omega) ( 1-\alpha u_i(t;u^0,\omega) ).  $$
Again by comparison principle and \eqref{stable2} we have that
\begin{equation*}
\label{stable3}
u_i(t,\omega)\le u_i(t;\alpha u^0,\omega ) \le \alpha u_i(t;u^0,\omega ) ,\ \ \forall i\in\Z, \  t\ge 0.
\end{equation*}
Similarly, we can get
$$ u_i(t;u^0,\omega ) \le \alpha u_i( t,\omega ) ,\ \ \forall i\in\Z, \ t\ge 0. $$
Thus for every $ t\ge 0 $,we can define $\alpha ( t ) \ge 1$ as
\begin{align}
\label{stable4}
\alpha ( t ) =\inf\{ \alpha \ge 1 \ \ | \ \ \frac{1}{\alpha}\le \frac{u_i(t;u^0,\omega )}{u_i(t,\omega )}\le \alpha \ \text{for\ any\ }i\in \mathbb{Z} \}.
\end{align}
It's easy to see that $ \alpha ( t_2 ) \le \alpha ( t_1 )  $ for every $ 0\le t_1 \le t_2 $. Therefore
$$ \alpha _{\infty}:=\inf \{ \alpha ( t ) \ \ |\ \  t\ge 0 \} =\underset{t\rightarrow \infty}{\lim}\alpha ( t )  $$ exists.
Then to get Theorem \ref{stable-thm}, it is sufficient to  prove $ \alpha _{\infty}=1 $.

Suppose by contradiction that $\alpha_{\infty}>1$. Let $ 1<\alpha <\alpha _{\infty} $ be fixed, we first prove that there is $I_{\alpha}\gg1$ such that
\begin{align}
\label{stable5}
\frac{1}{\alpha}\le \frac{u_i(t;u^0,\omega )}{u_i( t, \omega )}\le \alpha ,\ \ \forall i\ge I_{\alpha}+\int_0^t{c( s;\omega,\mu ) ds},\ t\ge 0.
\end{align}
To this end, we only need to prove that
\begin{align}
\label{stable20}
 \underset{i\rightarrow \infty}{\lim}\frac{u_i(t;u^0,\omega )}{e^{-\mu ( i-\int_0^t{c( s;\omega ,\mu ) ds} )}}=1\ \ \text{uniformly in } t\geq0.
\end{align}

In fact, since for every $\epsilon >0$, there is $J_{\epsilon,\omega}\gg1$ such that
$$ 1-\epsilon \le \frac{u_{i}^{0}}{u_i(0,\omega)}\le 1+\epsilon, \ \ \forall i\ge J_ {\epsilon,\omega}. $$
Let $A_\omega (t)$ be as in Lemma \ref{sub-solution-lemma}.
Since
$$ e^{-\mu ( i-\int_0^t{c( s;\omega ,\mu ) ds} )}-d_{\omega}e^{A_{\omega}( t ) -\tilde{\mu} ( i-\int_0^t{c( s;\omega ,\mu ) ds} )}\le u_i(t,\omega ) \le e^{-\mu ( i-\int_0^t{c( s;\omega ,\mu ) ds} )}, $$
then
\begin{align}
\label{stable13}
( 1-\epsilon ) e^{-\mu i}-( 1-\epsilon ) d_{\omega}e^{A_{\omega}( 0 ) -\tilde{\mu} i}\le u_{i}^{0}\le ( 1+\epsilon ) e^{-\mu i},\ \ \forall i\ge J_{\epsilon,\omega}.
\end{align}
We claim that there is $d\gg1$ such that
\begin{align}
\label{stable14}
 ( 1-\epsilon ) e^{-\mu i}-de^{A_{\omega}( 0 ) -\tilde{\mu} i}\le u_{i}^{0}\le ( 1+\epsilon ) e^{-\mu i}+de^{A_{\omega}( 0 ) -\widetilde{\mu} i},\,\,\,\,\forall i\in \mathbb{Z}.
\end{align}
Indeed, note that
$$ \lVert u^{0} \rVert _{\infty}e^{\tilde{\mu} J_{\epsilon,\omega}+| A_{\omega}( 0 ) |}e^{A_{\omega}( 0 ) -\tilde{\mu} i}\ge \lVert u^{0} \rVert _{\infty}e^{(\tilde{\mu}-\tilde{\mu}) J_{\epsilon,\omega}}\ge u_{i}^{0},\ \ \forall i\le J_{\epsilon,\omega}. $$
Hence
\begin{align*}
u_{i}^{0}\le d_{\epsilon ,\omega}e^{A_{\omega}( 0 ) -\tilde{\mu }i}\le ( 1+\epsilon ) e^{-\mu i}+d_{\epsilon ,\omega}e^{A_{\omega}( 0 ) -\tilde{\mu } i},\ \ \forall i\le J_{\epsilon,\omega},
\end{align*}
where $ d_{\epsilon ,\omega}=:\lVert u^{0} \rVert _{\infty}e^{\tilde{\mu} J_{\epsilon,\omega}+| A_{\omega}( 0 ) |} $. Combining this with \eqref{stable13}, we obtain that
\begin{align}
\label{stable15}
u_{i}^{0}\le ( 1+\epsilon ) e^{-\mu i}+d_{\epsilon,\omega}e^{A_{\omega}( 0 ) -\widetilde{\mu} i},\,\,\,\,\forall i\in \mathbb{Z}.
\end{align}
On the other hand, for every $d>1$, the function $ \mathbb{Z}\owns i\mapsto ( 1-\epsilon ) e^{-\mu i}-de^{A_{\omega}( 0 ) -\tilde{\mu} i} $ attains its maximum value at $ J_d:=[\frac{\text{In}{ ( \frac{d\tilde{\mu} e^{A_{\omega}(0)}}{(1-\epsilon) \mu} )}}{\tilde{\mu }-\mu}] $ or $[\frac{\text{In}{ ( \frac{d\tilde{\mu} e^{A_{\omega}(0)}}{(1-\epsilon) \mu} )}}{\tilde{\mu }-\mu}]  +1. $
Note that $ \underset{d\rightarrow \infty}{\lim}J_d=\infty  $ and  $$ \underset{d\rightarrow \infty}{\lim}( ( 1-\epsilon ) e^{-\mu J_d}-de^{A_{\omega}( 0 ) -\tilde{\mu} J_d} ) =0. $$
Then there is $ \tilde{d}_{\epsilon ,\omega}\gg ( 1-\varepsilon ) d_{\omega} $ such that $ J_{\tilde{d}_{\epsilon ,\omega}}\ge J_{\epsilon,\omega} $ and
$$ ( 1-\epsilon ) e^{-\mu J_{\tilde{d}_{\epsilon ,\omega}}}-\tilde{d}_{\epsilon ,\omega}e^{A_{\omega}( 0 ) -\tilde{\mu} J_{\tilde{d}_{\epsilon ,\omega}}}\le \underset{i\le J_{\epsilon,\omega}}{\inf}u_{i}^{0}. $$
Together with \eqref{stable13}, it follows that
\begin{align}
\label{stable16}
 ( 1-\epsilon ) e^{-\mu i}-de^{A_{\omega}( 0 ) -\tilde{\mu} i}\le u_{i}^{0},\,\,\,\,\forall i\in \mathbb{Z},\ \ d\geq \tilde{d}_{\epsilon,\omega} .
\end{align}
By \eqref{stable15} and \eqref{stable16} we drive that the claim \eqref{stable14} holds for every $ d\ge \max \{ \tilde{d}_{\epsilon ,\omega},d_{\epsilon ,\omega} \}$. Thus by similar arguments as proving Lemma \ref{sub-solution-lemma}, we can get that for $d\gg1$,
$$ \dot{\tilde{u}}_i(t,\omega ) \le H\tilde{u}_i(t,\omega ) +a( \theta _t\omega ) \tilde{u}_i(t,\omega ) ( 1-\tilde{u}_i(t,\omega ) )$$
on the set $ D_{\epsilon}:=\{ ( i,t ) \in \mathbb{Z}\times \mathbb{R}^+| \tilde{u}_i(t,\omega )\ge 0 \}  $, where $ \tilde{u}_i(t,\omega ) =( 1-\epsilon ) e^{-\mu (i-\int_0^t{c( s;\omega ,\mu ) ds})}-de^{A_{\omega}( t ) -\tilde{\mu} ( i-\int_0^t{c( s;\omega ,\mu ) ds})} $. Then by comparison principle we get that
\begin{equation*}
\label{stable17}
( 1-\epsilon ) e^{-\mu ( i-\int_0^t{c( s;\omega ,\mu ) ds} )}-de^{A_{\omega}( t ) -\tilde{\mu }( i-\int_0^t{c( s;\omega ,\mu ) ds} )}\le u_i(t;u^0,\omega )\end{equation*}  for $i\in \mathbb{Z}$, $t\ge 0$, $d\gg 1$.
Similarly, we can get that
$$u_i(t;u^0,\omega ) \leq ( 1+\epsilon ) e^{-\mu ( i-\int_0^t{c( s;\omega ,\mu ) ds})}+de^{A_{\omega}( t) -\tilde{\mu} ( i-\int_0^t{c( s;\omega ,\mu ) ds})},\ \ \forall i\in \mathbb{Z}, \  t\ge 0,\ d\gg 1.$$
 \eqref{stable20} and \eqref{stable5} then follow form the last two inequalities and the arbitrariness of $\epsilon>0$.

Next, let  $I_{\alpha}$ be given by \eqref{stable5} and set
$$ m_{\alpha}:=\frac{1}{\alpha _0}\inf\limits_{t\ge 0,i-\int_0^t{c( s ;\omega ,\mu ) ds}\le I_{\alpha}} u_i( t, \omega ),  $$
where $ \alpha _0=\alpha ( 0 ) =\sup\limits_{t\ge 0}\alpha ( t ) . $ It then follows from \eqref{stable4} that
$$ m_{\alpha}\le \min \{ u_i(t, \omega ) ,u_i(t;u^0,\omega ) \} ,\ \ \forall i\le I_{\alpha}+\int_0^t{c( s ;\omega ,\mu ) ds},\  t\ge 0. $$
By (H) there is $ T=T( \omega ) \ge 1 $ such that
\begin{align}
\label{stable6}
0<\frac{\underline{a}T}{2}<\int_\tau^{\tau+T}{a( \theta _{s}\omega )}ds <2\overline{a}T<\infty ,\ \ \forall \tau\in \mathbb{R}.
\end{align}
Let $ 0<\delta \ll 1 $ satisfy
\begin{align}
\label{stable7}
\alpha <e^{-2\delta T\overline{a}}\alpha _{\infty}\ \ \ \text{\and} \ \ ( ( \alpha _{\infty}-1 ) -\alpha _0( 1-e^{-2\delta T\overline{a}} ))m_{\alpha}>\delta.
\end{align}

We claim that
\begin{align}
\label{stable8}
\alpha ( ( k+1 ) T ) \le e^{-\delta \int_{kT}^{( k+1 ) T}{a( \theta _s\omega) ds}}\alpha ( kT ) ,\ \ \forall k\ge 0.
\end{align}
In fact, Set $ W_k( i,t;\omega ) =e^{\delta \int_{kT}^{t+kT}{a( \theta _s\omega ) ds}}u_i(t+kT;u^0,\omega )$, $ V_k( i,t;\omega ) =u_i(t; u_{\cdot}(0; \theta_{kT}\omega) ,\theta _{kT}\omega)  $, $ a_k( t ) =a( \theta _{t+kT}\omega )$ and $ \alpha _k=\alpha ( kT ) $,
it follows from \eqref{stable6} that
\begin{align}
\label{stable9}
\frac{d}{dt}W_k& =\delta a_k( t ) W_k+HW_k+a_k( t ) W_k( 1-u_i(t+kT;u^0,\omega ) ) \nonumber \\
& =HW_k+a_k( t ) W_k( 1-W_k ) +a_k( t ) W_k( ( 1-e^{-\delta \int_{kT}^{t+kT}{a( \theta _s\omega ) ds}} ) W_k+\delta ) \nonumber \\
& \le HW_k+a_k( t ) W_k( 1-W_k ) +a_k( t ) W_k( ( 1-e^{-2\delta T\overline{a}} ) W_k+\delta )
\end{align}
for every $ t\in ( 0,T )$, $ i\in \mathbb{Z}$ and $ k\ge 0 $. Also, it follows from \eqref{stable7} and $\alpha_\infty \leq \alpha_k \leq \alpha_0$ that
\begin{align}
\label{stable10}
 \frac{d}{dt}&( \alpha _kV_k )
-H( \alpha _kV_k )\nonumber\\=&a_k( t ) ( \alpha _kV_k ) ( 1-V_k ) \nonumber \\
=&a_k( t ) ( \alpha _kV_k ) ( 1- \alpha _kV_k ) +a_k( t) ( \alpha _kV_k ) ( ( 1-e^{-2\delta T\bar{a}} ) ( \alpha _kV_k ) +\delta ) \nonumber \\
& +a_k( t ) ( \alpha _kV_k ) ( ( ( \alpha _k-1 ) -( 1-e^{-2\delta T\bar{a}} ) \alpha _k ) V_k-\delta ) \nonumber \\
 \ge& a_k( t ) ( \alpha _kV_k ) ( 1- \alpha _kV_k ) +a_k( t ) ( \alpha _kV_k ) ( ( 1-e^{-2\delta T\bar{a}} ) ( \alpha _kV_k ) +\delta ) \nonumber \\
& +a_k( t ) ( \alpha _kV_k ) (( ( \alpha _{\infty}-1 ) -( 1-e^{-2\delta T\bar{a}} ) \alpha _0 ) m_{\alpha}-\delta ) \nonumber \\
 \ge& a_k( t ) ( \alpha _kV_k ) ( 1- \alpha _kV_k ) +a_k( t ) ( \alpha _kV_k) ( ( 1-e^{-2\delta T\bar{a}} ) ( \alpha _kV_k ) +\delta )
\end{align}
for $ i\le I_{\alpha}+\int_0^{t+kT}{c( s ;\omega ,\mu ) ds}$, $ 0\le t\le T$ and $ k\ge 0. $ Therefore, it follows from \eqref{stable4}, \eqref{stable5},
$ e^{\delta \int_{kT}^{( k+1 ) T}{a( \theta _s\omega ) ds}}\alpha \le \alpha _{\infty}\le \alpha _k $, and comparison principle that
$$ e^{\delta \int_{kT}^{t+kT}{a( \theta _s\omega ) ds}}u_i(t+kT;u^0,\omega) \le \alpha _ku_i(t+kT, \omega ) $$
for $i\le I_{\alpha}+\int_0^{t+kT}{c( s ;\omega ,\mu ) ds}$, $ t\in [ 0,T ] $ and  $k\ge 0$.
That is
$$ u_i(t+kT;u^0,\omega ) \le e^{-\delta \int_{kT}^{t+kT}{a( \theta _s\omega ) ds}}\alpha _ku_i(t+kT, \omega )  $$
for $i\le I_{\alpha}+\int_0^{t+kT}{c( s ;\omega ,\mu ) ds}$, $ t\in [ 0,T ]$ and $ k\ge 0. $
Note that $ \alpha \le e^{-\delta \int_{kT}^{( k+1 ) T}{a( \theta _s\omega ) ds}}\alpha _{\infty}\le e^{-\delta \int_{kT}^{( k+1 ) T}{a( \theta _s\omega) ds}}\alpha _k, $ then by \eqref{stable5} we have that
$$ u_i(t+kT;u^0,\omega ) \le e^{-\delta \int_{kT}^{t+kT}{a( \theta _s\omega ) ds}}\alpha _ku_i( t+kT, \omega )$$
for $ i\ge I_{\alpha}+\int_0^{t+kT}{c( s ;\omega ,\mu ) ds}$, $ t\in [ 0,T ]$ and $ k\ge 0. $
Thus,
\begin{align}
\label{stable11}
u_i(t+kT;u^0,\omega ) \le e^{-\delta \int_{kT}^{t+kT}{a( \theta _s\omega ) ds}}\alpha _ku_i(t+kT,\omega )
\end{align}
for  $i\in \mathbb{Z}$, $t\in [ 0,T ]$ and $k\ge0$.
By interchanging $W_k$ and $V_k$ in \eqref{stable9} and \eqref{stable10}, we can also get that
\begin{align}
\label{stable12}
u_i( t+kT,\omega ) \le e^{-\delta \int_{kT}^{t+kT}{a( \theta _s\omega ) ds}}\alpha _ku_i(t+kT;u^0,\omega )
\end{align}
for  $i\in \mathbb{Z}$, $t\in [ 0,T ]$ and $k\ge0$.
Then the claim \eqref{stable8} follows from \eqref{stable11} and \eqref{stable12}.

Therefore, it follows from \eqref{stable8} that,
\begin{equation}\label{stability00}
\alpha _{\infty}\le \alpha ( ( k+1 ) T ) \le e^{-\delta \sum_{i=0}^k{\int_{iT}^{( i+1 ) T}{a( \theta _s\omega ) ds}}}\alpha ( 0 ) =e^{-\delta \int_0^{( k+1 ) T}{a( \theta _s\omega ) ds}}\alpha _0\end{equation}
for any $k\ge 0$.
Note that $ \int_0^{\infty}{a( \theta _s\omega ) ds}=\infty$ for $\omega\in\Omega_0$. Then by letting $k\rightarrow\infty$ in \eqref{stability00}, we get that $\alpha_\infty \leq0$, a contradiction. So we get that $\alpha_\infty=1$, which leads to the asymptotic stability of the random transition fronts.
\end{proof}

\smallskip


\end{document}